\theoremstyle{plain}
\newtheorem*{corollary}{Corollary}
\newtheorem{lemma}{Lemma}
\newtheorem{theorem}{Theorem}
\theoremstyle{remark}
\newtheorem*{remark}{Remark}
\theoremstyle{definition}
\newtheorem{example}{Example}
\DeclareMathOperator{\Der}{Der}
\DeclareMathOperator{\id}{id}
\DeclareMathOperator{\ad}{ad}
\DeclareMathOperator{\End}{End}
\DeclareMathOperator{\Aut}{Aut}
\DeclareMathOperator{\Hom}{Hom}
\begin{document}

\title{Structure of $H$-(co)module
Lie algebras}

\author{A.\,S.~Gordienko}

\address{Memorial University of Newfoundland, St. John's, NL, Canada}
\email{asgordienko@mun.ca}
\keywords{Lie algebra, stability, Levi decomposition, radical, grading, Hopf algebra, Hopf algebra action, $H$-module algebra, $H$-comodule algebra}

\begin{abstract}
Let $L$ be a finite dimensional Lie algebra over a field of characteristic $0$. Then by the original Levi
theorem, $L = B \oplus R$ where $R$ is the solvable radical and $B$ is some maximal semisimple subalgebra.
We prove that if $L$ is an $H$-(co)module algebra  for a finite dimensional (co)semisimple Hopf algebra $H$,
then $R$ is $H$-(co)invariant and $B$ can be chosen to be $H$-(co)invariant too.
Moreover, the nilpotent radical $N$ of $L$ is $H$-(co)invariant and there exists an $H$-sub(co)module $S\subseteq R$ such that $R=S\oplus N$ and $[B,S]=0$.
In addition, the $H$-(co)invariant analog of the Weyl theorem is proved.
 In fact, under certain
conditions, these results hold for an $H$-comodule Lie algebra $L$, even if $H$ is infinite dimensional.
In particular, if $L$ is a Lie algebra graded by an arbitrary group $G$, then $B$ can be chosen to be graded,
and if $L$ is a Lie algebra with a rational action of a reductive affine algebraic
group $G$ by  automorphisms, then $B$ can be chosen to be $G$-invariant.
 Also we prove that every finite dimensional semisimple $H$-(co)module Lie algebra over a field of characteristic $0$ is a direct sum of its minimal $H$-(co)invariant ideals.
\end{abstract}

\subjclass[2010]{Primary 17B05; Secondary 17B40, 17B55, 17B70, 16T05, 14L17.}

\thanks{
Supported by post doctoral fellowship
from Atlantic Association for Research
in Mathematical Sciences (AARMS), Atlantic Algebra Centre (AAC),
Memorial University of Newfoundland (MUN), and
Natural Sciences and Engineering Research Council of Canada (NSERC)}

\maketitle

The applications of Lie and associative algebras
with an additional structure, e.g. graded, $H$-(co)module, or $G$-algebras, gave rise to the studies
of the objects and decompositions that have nice properties with respect to these structures.
One of the applications of invariant decompositions is in the combinatorial theory of graded, $G$- and $H$-polynomial
identities~\cite{AljaGia, AljaGiaLa, GiaLa, GiaSheZai, ZaiGia, ASGordienko2, ASGordienko3, ZaiLie}.

The Levi theorem is one of the main results of the structure Lie theory, as
well as the Wedderburn~--- Mal'cev theorem is one of the central results
in the structure ring theory. In 1957 E.J.~Taft proved~\cite{Taft} the $G$-invariant Levi and Wedderburn~--- Mal'cev
theorems 
for $G$-algebras with an action of a finite group $G$ by automorphisms and anti-automorphisms.
Due to a well-known duality between $G$-gradings and $G$-actions, Taft's result implies
graded decompositions of algebras graded by a finite Abelian group $G$. 
The study of Wedderburn decompositions for $H$-module algebras was started by A.\,V. Sidorov~\cite{Sidorov}
in 1986. In 1999 D. \c Stefan and F. Van Oystaeyen~\cite{SteVanOyst} proved
 the $H$-coinvariant Wedderburn~--- Mal'cev theorem for finite dimensional $H$-comodule associative algebras,
 where $H$ is
a Hopf algebra with an $\ad$-invariant left integral $t\in H^*$
such that $t(1)=1$.
In particular, they proved the $H$-(co)invariant Wedderburn~--- Mal'cev theorem for finite dimensional semisimple $H$
over a field of characteristic $0$, the graded Wedderburn~--- Mal'cev theorem for  any grading group provided that the Jacobson radical is graded too,
and the $G$-invariant Wedderburn~--- Mal'cev theorem for associative algebras with a rational action of a reductive algebraic group $G$ by automorphisms.
 The graded Levi theorem for finite dimensional Lie algebras over an algebraically closed field of characteristic $0$, graded by a finite group, was proved by D.~Pagon, D.~Repov\v s, and M.V.~Zaicev~\cite{PaReZai} in 2011.
 
In this paper we prove the $H$-coinvariant Levi theorem in the case when the Hopf algebra $H$
has an $\ad$-invariant left integral $t\in H^*$
such that $t(1)=1$ (Theorem \ref{TheoremHcoLevi}). As a consequence we obtain the $H$-invariant Levi theorem for $H$-module Lie algebras for a finite dimensional semisimple Hopf algebra $H$ (Theorem \ref{TheoremHLevi}), the graded Levi theorem for an arbitrary grading group (Theorem~\ref{TheoremGradLevi}),
and the $G$-invariant Levi theorem for Lie algebras with a rational action of a reductive algebraic group $G$ (Theorem~\ref{TheoremAffAlgGrLevi}).

An important condition in the invariant Levi and Wedderburn~--- Mal'cev theorems is 
the stability of the radicals. In the case of $G$-algebras the stability is clear since
 the radicals are invariant under automorphisms and anti-automorphisms. 
In 1984 M.~Cohen and S.~Montgomery~\cite{CohenMont} proved
that the Jacobson radical of a $G$-graded associative algebra is graded if $|G|^{-1}$
belongs to the base field. In 2001 V.~Linchenko~\cite{LinchenkoJH} proved
the stability of the Jacobson radical of a finite dimensional $H$-module associative algebra
over a field of characteristic $0$ for a finite dimensional semisimple Hopf algebra~$H$.
This result was later generalized by V.~Linchenko, S.~Montgomery and L.W.~Small~\cite{LinMontSmall}.
In 2011 D.~Pagon, D.~Repov\v s, and M.V.~Zaicev~\cite[Proposition 3.3 and its proof]{PaReZai} proved that
the solvable radical of a finite dimensional Lie algebra over an algebraically closed field 
of characteristic $0$, graded by any group, is graded.

 In this paper we prove that the solvable and the nilpotent radicals of an $H$-(co)module Lie algebra over a field of characteristic $0$ are $H$-(co)invariant for any finite dimensional (co)semisimple Hopf algebra $H$ (Theorem~\ref{TheoremRadicals}).
 
 In~\cite{ZaiLie} M.\,V.~Zaicev used the decomposition $L=B\oplus S\oplus N$ (direct sum of subspaces)
where $B$ is a maximal semisimple subalgebra of a finite dimensional Lie algebra $L$
over a field of characteristic $0$, $N$ is the nilpotent
radical of $L$, $S\oplus N$ is the solvable radical of $L$, $[B, S]=0$,
in order to prove the analog of S.\,A.~Amitsur's conjecture  on codimension growth of polynomial identities
 for Lie algebras. In~\cite{ASGordienko2} the author used a $G$-invariant
 decomposition $L=B\oplus S\oplus N$ in order to prove the analog of S.\,A.~Amitsur's conjecture  for polynomial $G$-identities
 and graded identities of Lie algebras. (In the graded case the grading group was required to be Abelian.)
In Section~\ref{SectionHLBSN} we prove the existence of an $H$-(co)invariant
decomposition  $L=B\oplus S\oplus N$. This is needed in the
study of polynomial $H$-identities and graded identities for a non-Abelian grading group
(see also~\cite{ASGordienko3}). The main tool in this proof is the $H$-(co)invariant analog of the Weyl theorem
(Section~\ref{SectionHWeyl}).

In Section~\ref{SectionHSS} we prove that every finite dimensional semisimple $H$-(co)module Lie algebra over a field of characteristic $0$ is a direct sum of its minimal $H$-(co)invariant ideals.
In addition to its own interest, this result is needed in the
study of polynomial $H$-identities as well.

\section{Introduction} 

\subsection{Graded spaces and Lie algebras}

Let $V$ be a vector space over a field $F$ and $G$ be a group.
We say that $V$ is \textit{$G$-graded}
if there is a fixed decomposition $V=\bigoplus_{g \in G} V^{(g)}$.
Let $V=\bigoplus_{g \in G} V^{(g)}$ and $W=\bigoplus_{g \in G} W^{(g)}$
be two graded vector spaces. A linear map $\varphi \colon V \to W$
is \textit{graded} if $\varphi(V^{(g)}) \subseteq W^{(g)}$
for all $g \in G$.

Let $L=\bigoplus_{g \in G} L^{(g)}$ (direct sum of subspaces) be a Lie algebra. We say that $L$ is \textit{graded} if $[L^{(g)},L^{(h)}]\subseteq L^{(gh)}$ for all $g,h \in G$.  A subspace $W \subseteq L$
is \textit{graded} if $W=\bigoplus_{g\in G} (L^{(g)} \cap W)$.

\begin{example}[\cite{PaReZai}]
Consider $L = \left\lbrace\left(\begin{array}{cc} \mathfrak{gl}_2(F) & 0 \\ 0 & \mathfrak{gl}_2(F) \end{array}\right)\right\rbrace \subseteq \mathfrak{gl}_4(F)$.
Then $L$ is an $S_3$-graded Lie algebra where
$S_3$ is the third symmetric group with the unit $e$,
$$L^{(e)} = \left\lbrace\left(\begin{array}{cccc}
 \alpha & 0 &  0 & 0 \\
  0 & \beta & 0 & 0 \\
   0 & 0 & \gamma & 0 \\
    0 & 0 & 0 & \lambda
 \end{array}\right)\right\rbrace,\quad
L^{\bigl((12)\bigr)} = \left\lbrace\left(\begin{array}{cccc} 0 & \beta & 0 & 0 \\
 \alpha & 0 &  0 & 0 \\
  0 & 0 & 0 & 0 \\
   0 & 0 & 0 & 0
 \end{array}\right)\right\rbrace,$$ $$
L^{\bigl((23)\bigr)} = \left\lbrace\left(\begin{array}{cccc}
  0 & 0 & 0 & 0 \\
   0 & 0 & 0 & 0 \\
  0 & 0 & 0 & \lambda \\
   0 & 0 & \gamma & 0 \\
 \end{array}\right)\right\rbrace,$$
 the other components are zero, $\alpha,\beta,\gamma,\lambda \in F$. 
\end{example}

\subsection{Integrals on Hopf algebras}
The related notions to graded Lie algebras are the ones of $H$-module and $H$-comodule Lie algebras for a Hopf algebra $H$. We refer the reader to~\cite{Abe, Danara, Montgomery, Sweedler} for an account
  of Hopf algebras.   

   Recall that $t \in H^*$ is a \textit{left integral on $H$} if $t(a_{(2)})a_{(1)}= t(a)1$ for all $a \in H$.
Here we use Sweedler's notation $\Delta(a)=a_{(1)}\otimes a_{(2)}$.
      We say that a left integral is \textit{$\ad$-invariant}
 if $t(a_{(1)}\ b\ S(a_{(2)}))=\varepsilon(a)t(b)$ for all $a,b \in H$.
  Only the results of Section~\ref{SectionHSS} are proved for an arbitrary $H$.
 In the other sections we assume that there exists an $\ad$-invariant left integral
 $t \in H^*$ such that $t(1)=1$.
 Now we list three main examples~\cite{SteVanOyst} of such Hopf algebras~$H$.

First, we notice that the existence of an integral $t \in H^*$, such that $t(1)=1$,
 is equivalent to the cosemisimplicity of $H$ (see e.g.~\cite[Exercise~5.5.9]{Danara}).

\begin{example}\label{ExampleHSS}
Let $H$ be a finite dimensional (co)semisimple Hopf algebra
over a field $F$ of characteristic $0$. 
Then there exists an $\ad$-invariant left integral $t \in H^*$ such that $t(1)=1$.
\end{example}
\begin{proof}
By the Larson~--- Radford theorem (see e.g. \cite[Theorem 7.4.6]{Danara}),
 $H$ is semisimple if and only if it is cosemisimple. The third equivalent 
condition is that $S^2 = \id_H$. Since $H$ is cosemisimple, there exists a left integral $t \in H^*$ such that $t(1)=1$. Every integral on such algebra $H$ is cocommutative (see e.g.~\cite[Exercise~7.4.7]{Danara}), i.e. $t(ab)=t(ba)$ for all $a,b \in H$.  Thus $$t(a_{(1)}\ b\ S(a_{(2)}))=t(b (Sa_{(2)})a_{(1)})=
t\Bigl(b\ S\bigl((Sa_{(1)})a_{(2)}\bigr)\Bigr)=\varepsilon(a)t(b)\text{ for all }a,b \in H$$ and $t$ is $\ad$-invariant.\end{proof}

\begin{example}\label{ExampleFG} Let $G$ be any group.
Denote by $FG$ the group algebra of $G$. Then $FG$ is a Hopf algebra
with the comultiplication $\Delta(g)=g\otimes g$, the counit $\varepsilon(g)=1$,
and  the antipode $S(g)=g^{-1}$, $g \in G$. Consider $t \in (FG)^*$,
$t(g)=\left\lbrace\begin{array}{ccc} 0 & \text{ if } & g\ne 1, \\ 
1 & \text{ if } & g = 1. \end{array}\right.$
Then $t$ is an $\ad$-invariant left integral on $FG$. Note that $t(1)=1$.
\end{example}

\begin{example}\label{ExampleAffAlgGr}
Let $G$ be an
affine algebraic group over a field $F$.
Denote by $\mathcal O(G)$ the coordinate algebra of
$G$. Then $\mathcal O(G)$ is a Hopf algebra
where the comultiplication $\Delta \colon \mathcal O(G) \to \mathcal O(G) \otimes \mathcal O(G)$
is dual to the multiplication $G \times G \to G$,
the counit $\varepsilon \colon  \mathcal O(G) \to F$
is defined by $\varepsilon(f)=f(1_G)$, and the antipode $S \colon 
\mathcal O(G) \to \mathcal O(G)$ is dual to the map $g \to g^{-1}$, $g \in G$.
  If $F$ is algebraically closed of characteristic $0$ and $G$ is reductive,
then there exists an $\ad$-invariant left integral $t \in \mathcal O(G)^*$ such that $t(1)=1$.
\end{example}
\begin{proof}
All rational representations of $G$ (see e.g.~\cite{Nagata})
are completely reducible. Hence by~\cite[Theorem 4.2.12]{Abe}, $\mathcal O(G)$ is cosemisimple.
 Thus there exists a left integral $t \in H^*$ such that $t(1)=1$. Since $H$ is commutative,
$t$ is $\ad$-invariant.
\end{proof}

We conclude the subsection with an example of a Hopf algebra that does not have
nonzero integrals.
\begin{example}\label{ExampleUnivEnv}
Let $L$ be a Lie algebra over a field $F$. The universal enveloping algebra $U(L)$
is a Hopf algebra where $\Delta(v)=v\otimes 1 + 1 \otimes v$, $\varepsilon(v)=0$,
$S(v)=-v$ for all $v \in L$. If $L \ne 0$, $F$ is of characteristic $0$,
and $t \in U(L)^*$ is a left integral of $U(L)$, then $t=0$.
\end{example}
\begin{proof}
By Poincar\'e~--- Birkhoff~--- Witt theorem, it 
 is sufficient to show that $t(v_1^{m_1}\ldots v_k^{m_k})=0$
for all linearly independent $v_1, \ldots, v_k \in L$ and all $m_1, \ldots, m_k \geqslant 0$,
$k \geqslant 0$. We fix $v_1, \ldots, v_k \in L$, introduce the degree lexicographic
ordering $\prec$ on $k$-tuples $(m_1, \ldots, m_k)$, and prove the assertion by induction.
First, we notice that $t(v)1 = t(1)v+t(v)1$ for all $v \in L$. Hence $t(1)=0$
and the base of the induction is proved. Suppose $m_1, \ldots, m_k \geqslant 0$.
Denote $\Lambda := \lbrace (\ell_1, \ldots, \ell_k) \mid 0 \leqslant \ell_i \leqslant m_i,
\ 1 \leqslant i \leqslant k-1;\ 0 \leqslant \ell_k \leqslant m_k+1 \rbrace$
and $$\Lambda_1 := \Lambda\backslash \lbrace (m_1, \ldots, m_k+1),  (m_1, \ldots, m_k) \rbrace
= \lbrace  (\ell_1, \ldots, \ell_k) \in \Lambda \mid (\ell_1, \ldots, \ell_k) \prec (m_1, \ldots, m_k)\rbrace.$$
Then $$t(v_1^{m_1}\ldots v_k^{m_k+1})1=t\left((v_1^{m_1}\ldots v_k^{m_k+1})_{(2)}\right)
(v_1^{m_1}\ldots v_k^{m_k+1})_{(1)}=$$ $$\sum_{(\ell_1, \ldots, \ell_k)\in\Lambda} \tbinom{m_1}{\ell_1}\tbinom{m_2}{\ell_2}\ldots \tbinom{m_{k-1}}{\ell_{k-1}}
\tbinom{m_k+1}{\ell_k}\ t(v_1^{\ell_1}\ldots v_k^{\ell_k})
\ v_1^{m_1-\ell_1}\ldots v_k^{(m_k+1)-\ell_k}
=$$
$$t(v_1^{m_1}\ldots v_k^{m_k+1})1+(m_k+1)t(v_1^{m_1}\ldots v_k^{m_k})v_k+
$$
$$\sum_{(\ell_1, \ldots, \ell_k)\in\Lambda_1} \tbinom{m_1}{\ell_1}\tbinom{m_2}{\ell_2}\ldots \tbinom{m_{k-1}}{\ell_{k-1}}
\tbinom{m_k+1}{\ell_k}\ t(v_1^{\ell_1}\ldots v_k^{\ell_k})
\ v_1^{m_1-\ell_1}\ldots v_k^{(m_k+1)-\ell_k}$$ $$
=t(v_1^{m_1}\ldots v_k^{m_k+1})1+(m_k+1)t(v_1^{m_1}\ldots v_k^{m_k})v_k+0 $$
by the induction hypothesis.
 Hence $t(v_1^{m_1}\ldots v_k^{m_k})=0$.
\end{proof}
  
\subsection{$H$-comodule Lie algebras}
Let $L$ be a Lie algebra over a field $F$. Suppose $L$ is a right $H$-comodule for some Hopf algebra
$H$. Denote by $\rho \colon L \to L \otimes H$ the corresponding comodule map. We say that $L$ is an
\textit{$H$-comodule Lie algebra} if $\rho([a,b])=[a_{(0)},b_{(0)}] \otimes a_{(1)}b_{(1)}$
for all $a,b \in L$. Here we use
Sweedler's notation $\rho(a)=a_{(0)}\otimes a_{(1)}$.

\begin{example}\label{ExampleGraded} 
Let $L=\bigoplus_{g \in G} L^{(g)}$ be a Lie
algebra over a field $F$ graded by a group $G$. Then $L$ is an $FG$-comodule algebra where
$\rho(a^{(g)})=a^{(g)} \otimes g$ for all $g \in G$ and $a^{(g)} \in L^{(g)}$. 
\end{example}

Another example of $H$-comodule algebras arises when
an affine algebraic group acts on a Lie algebra.

\begin{example}\label{ExampleRatAffAlgGrAction} Let $L$ be a Lie algebra
over a field $F$ and let $G$ be an affine algebraic group. Suppose $L$ is endowed with a \textit{rational action} of $G$ by automorphisms, i.e. there is a fixed homomorphism $G \to \Aut(L)\subseteq GL(L)$
such that for some basis $e_1, \ldots, e_m$ of $L$
we have $g e_j =\sum_{i=1}^m \omega_{ij}(g) e_i$ where $\omega_{ij}$ are polynomials in the coordinates of $g \in G$. Then $L$ is an $\mathcal O(G)$-comodule algebra where $\rho(e_j)=\sum_{i=1}^m e_i \otimes \omega_{ij}$,
$1 \leqslant j \leqslant m$,
and $g a = a_{(1)}(g) a_{(0)}$, $g \in G$.
Furthermore, each $\mathcal O(G)$-subcomodule of $L$ is a $G$-invariant subspace and vice versa. 
\end{example}
\begin{proof} Suppose $[e_i, e_k] = \sum_{j=1}^m c^{j}_{ik} e_j$, $c^{j}_{ik} \in F$.
Since  $g[e_i, e_k]=[ge_i, g e_k]$, we have $$\sum_{j=1}^m c^{j}_{ik}\ ge_j
= \sum_{p,q=1}^m [\omega_{pi}(g) e_p, \omega_{qk}(g) e_q],$$
$$\sum_{j,\ell=1}^m c^{j}_{ik} \omega_{\ell j}(g) e_\ell
= \sum_{\ell,p,q=1}^m c^\ell_{pq} \omega_{pi}(g)  \omega_{qk}(g) e_\ell,$$
$$\sum_{j=1}^m c^{j}_{ik} \omega_{\ell j}(g) 
= \sum_{p,q=1}^m c^\ell_{pq} \omega_{pi}(g) \omega_{qk}(g)$$
 for all $g\in G$ and $1 \leqslant i,k,\ell \leqslant m$.
 Hence $$\rho([e_i, e_k])=\sum_{j=1}^m c^{j}_{ik} \rho(e_j)=
 \sum_{j,\ell=1}^m c^{j}_{ik}  e_\ell \otimes \omega_{\ell j}
 = $$ $$\sum_{\ell,p,q=1}^m  c^\ell_{pq} e_\ell \otimes \omega_{pi}\omega_{qk}
 = \sum_{p,q=1}^m  [e_p, e_q] \otimes \omega_{pi}\omega_{qk}
 = [(e_i)_{(0)}, (e_k)_{(0)}] \otimes (e_i)_{(1)}(e_k)_{(1)}.$$
  Thus $L$ is an $\mathcal O(G)$-comodule algebra. The other properties are proved explicitly as well.
\end{proof}

Let $V$ and $W$ be $H$-comodules. We say that a linear map $\varphi \colon V \to W$
is an \textit{$H$-colinear map} if $\varphi(v)_{(0)}\otimes \varphi(v)_{(1)}=\varphi(v_{(0)})
\otimes v_{(1)}$ for all $v \in V$. If $W \subseteq V$ is a subspace and $\rho_V(W)\subseteq W \otimes
H$, we say that $W$ is an \textit{$H$-subcomodule}
or an \textit{$H$-coinvariant subspace}.

\subsection{$H$-module Lie algebras}

Again, let $L$ be a Lie algebra over a field $F$. Suppose $L$ is a left $H$-module where $H$
is a Hopf algebra. We say that $L$ is an
\textit{$H$-module Lie algebra} if $h([a,b])=[h_{(1)}a, h_{(2)}b]$
for all $h \in H$ and $a,b \in L$.

\begin{example} Let $L$ be a Lie $G$-algebra for an arbitrary group $G$, i.e. there is a fixed homomorphism $G \to \Aut(L)$. Then $L$ is an $FG$-module algebra.
\end{example}
\begin{example}\label{ExampleUgModule}
 Let $L$ and $\mathfrak g$ be Lie algebras. Suppose
$\mathfrak g$ is acting on $L$ by derivations, i.e. there is a fixed homomorphism $\mathfrak g \to \Der(L)$
of Lie algebras where $\Der(L)\subseteq \mathfrak{gl}(L)$ is the algebra of derivations.
Consider the corresponding homomorphism $U(\mathfrak g) \to \End_F(L)$ of associative
algebras. Then $L$ becomes an $U(\mathfrak g)$-module algebra.
\end{example}

If $\dim H < +\infty$, then we can define a Hopf algebra structure on the space $H^* := \Hom_F(H, F)$
using the dual operators.
In this case, every $H$-comodule algebra $L$ is an $H^*$-module algebra and vice versa. The correspondence
between the $H$-coaction and the $H^*$-action is given by the formula $h^* a = h^*(a_{(1)})a_{(0)}$
where $h^* \in H^*$, $a \in L$.

\section{Stability of the radicals in Lie algebras}\label{SectionStability}

First we need the following standard

\begin{lemma}\label{LemmaNJadL} Let $L$ be a Lie algebra over 
some field $F$ and let $N$ be a nilpotent ideal of $L$. Denote by $A$ the associative
subalgebra of $\End_F L$ generated by $(\ad L)$. Then $(\ad N) \subseteq J(A)$.
(Here $\ad \colon L \to \mathfrak{gl}(L)$, $(\ad a)b := [a,b]$,
is the adjoint representation of $L$ and $J(A)$ is the Jacobson radical of $A$.)
\end{lemma}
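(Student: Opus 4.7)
The plan is to exhibit the two-sided ideal of $A$ generated by $\ad N$ as a nilpotent ideal, since any nilpotent two-sided ideal of an associative algebra is automatically contained in its Jacobson radical (every element $x$ of such an ideal satisfies $x^k=0$, so $1-ax$ is invertible with inverse $1+ax+(ax)^2+\cdots+(ax)^{k-1}$ for every $a\in A$). The main work, therefore, is producing an $A$-stable filtration of $L$ along which $\ad N$ acts by strict shifts.

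To get this filtration I would take the lower central series of $N$: set $N^1=N$ and $N^{j+1}=[N,N^j]$. Since $N$ is a nilpotent ideal, $N^k=0$ for some $k\geqslant 1$. I claim each $N^j$ is an ideal of $L$, i.e. $[L,N^j]\subseteq N^j$. The base $j=1$ is the assumption that $N$ is an ideal. For the step, the Jacobi identity gives
$$[L,N^{j+1}]=[L,[N,N^j]]\subseteq [[L,N],N^j]+[N,[L,N^j]]\subseteq [N,N^j]+[N,N^j]=N^{j+1},$$
using $[L,N]\subseteq N$ and the inductive hypothesis. Setting $N^0:=L$, I therefore obtain a chain $L=N^0\supseteq N^1\supseteq\cdots\supseteq N^k=0$ of $L$-submodules, so in particular every element of $A$ preserves each $N^j$. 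Moreover, for $n\in N$ and $j\geqslant 0$ one has $(\ad n)(N^j)\subseteq [N,N^j]=N^{j+1}$ (with the convention $N^{0+1}=N^1=N$, which is fine because $[N,L]\subseteq N$).

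Now let $I$ denote the two-sided ideal of $A$ generated by $\ad N$. A typical element of $I^k$ is a sum of products of the form $a_0(\ad n_1)a_1(\ad n_2)a_2\cdots(\ad n_k)a_k$ with $a_i\in A$ and $n_i\in N$. Reading such a product from the right on any $v\in L$, each $a_i$ keeps us inside whatever $N^j$ we occupy, while each factor $\ad n_i$ pushes us from $N^j$ into $N^{j+1}$. After $k$ applications of factors from $\ad N$ we land in $N^k=0$. Hence $I^k L=0$; since $A\subseteq\End_F L$ acts faithfully on $L$, this forces $I^k=0$ in $A$. Therefore $I$ is a nilpotent two-sided ideal of $A$, and the standard observation recalled above gives $\ad N\subseteq I\subseteq J(A)$, proving the lemma.

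No step is really an obstacle; the only mildly delicate point is the Jacobi-identity induction showing $N^j\triangleleft L$, which is what allows the whole algebra $A$ (and not merely $\ad N$) to respect the filtration and thereby forces $I$, rather than only the associative algebra generated by $\ad N$, to be nilpotent.
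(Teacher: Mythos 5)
Your proof is correct, and it follows the same overall skeleton as the paper's: both arguments pass to the two-sided ideal $I$ of $A$ generated by $\ad N$, show that $I$ is nilpotent, and invoke the standard fact that a nilpotent two-sided ideal lies in $J(A)$. The difference is in how the nilpotency of $I$ is established, i.e.\ in how the factors from $A$ interleaved between the factors from $\ad N$ are handled. The paper rewrites a general element of $I^n$ into the normal form $a(b_1\cdots b_n)c$ with $a,c\in A$ and $b_i\in\ad N$ by repeatedly using $[\ad x,\ad y]=\ad [x,y]$ (together with the fact that $N$ is an ideal, so these commutators stay in $\ad N$) to push the $\ad L$-factors to the outside, and then observes that any product of $p$ elements of $\ad N$ vanishes when $N^p=0$. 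You instead filter $L$ by the lower central series of $N$, check via the Jacobi identity that each term $N^j$ is an ideal of $L$ (hence preserved by all of $A$), note that $\ad N$ shifts the filtration strictly downward, and conclude $I^kL=0$, whence $I^k=0$ by faithfulness of $A\subseteq\End_F L$. Your filtration argument avoids the normal-form manipulation and is arguably cleaner; the paper's rewriting argument is purely algebraic and does not need the (easy) verification that the terms of the lower central series of $N$ are ideals of the ambient algebra $L$. Both proofs rest on exactly the same two hypotheses --- that $N$ is an ideal and that $N$ is nilpotent --- and both are complete.
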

\begin{proof}
Let $N^p=0$, $p \in \mathbb N$.
Then \begin{equation}\label{EqBp0} b_1 \ldots b_p = 0 \text{ for all } b_i \in \ad N.
\end{equation}
Note that if $I$ is the  two-sided ideal of $A$ generated by $(\ad N)$,
then $I^n$, $n\in \mathbb N$, consists of linear combinations of elements
$$(a_{i_{01}} \ldots a_{i_{0s_0}}) b_1 (a_{i_{11}} \ldots a_{i_{1s_1}})
b_2 (a_{i_{21}} \ldots a_{i_{2s_2}})  \ldots b_{n-1}(a_{i_{n-1,1}} \ldots a_{i_{{n-1},s_{n-1}}}) b_n  (a_{i_{n1}} \ldots a_{i_{ns_n}})$$
where $b_i \in \ad N$, $a_{ij} \in \ad L$.
Using $[\ad a, \ad b] = \ad [a, b]$, we move $a_{ij}$ outside and 
obtain that $I^n$ consists of linear combinations of
elements $a(b_1 b_2 \ldots b_n)c$ where $a,c \in A$, $b_i \in \ad N$.
Thus~(\ref{EqBp0}) implies $I^p = 0$ and $(\ad N) \subseteq I \subseteq J(A)$.
\end{proof}

\begin{lemma}\label{LemmaHI}
Let $I$ be an ideal of an $H$-module Lie algebra $L$ where $H$ is a Hopf algebra.
 Then $HI$ is an $H$-invariant ideal of $L$.
\end{lemma}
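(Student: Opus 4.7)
The plan is to verify two properties of $HI$: that it is stable under the $H$-action, and that it is a Lie ideal of $L$. The first is essentially automatic because $H$ is closed under its own multiplication: for $h, k \in H$ and $a \in I$, we have $h(ka) = (hk)a \in HI$, so $H(HI) \subseteq HI$. So the only real content of the lemma is checking $[L, HI] \subseteq HI$.

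For this I would use the standard antipode trick to reduce the bracket $[ha, \ell]$ to an expression living in $HI$. Starting from the defining axiom $h[x, y] = [h_{(1)}x, h_{(2)}y]$ of an $H$-module Lie algebra, apply it with $x = a \in I$, $y = S(h_{(2)})\ell$, and $h$ replaced by $h_{(1)}$; coassociativity then yields
\[
h_{(1)}\bigl[a,\; S(h_{(2)})\ell\bigr] \;=\; \bigl[h_{(1)}a,\; h_{(2)} S(h_{(3)})\ell\bigr].
\]
Now the antipode identity gives $h_{(2)} S(h_{(3)}) = \varepsilon(h_{(2)})1$, and the counit axiom collapses $h_{(1)}\varepsilon(h_{(2)}) = h$, so the right-hand side equals $[ha, \ell]$. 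Thus
\[
[ha, \ell] \;=\; h_{(1)}\bigl[a,\; S(h_{(2)})\ell\bigr].
\]
Since $I$ is a plain ideal of $L$, each Sweedler summand $[a, S(h_{(2)})\ell]$ lies in $I$, and then $h_{(1)}$ acting on it lands in $HI$. So $[ha, \ell] \in HI$, and by antisymmetry $[\ell, ha] \in HI$; extending bilinearly over $h \in H$, $a \in I$, $\ell \in L$ gives $[L, HI] \subseteq HI$.

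The only real obstacle is the Sweedler-notation bookkeeping, which is routine once one fixes the correct substitution in the module algebra axiom; the conceptual point is simply that the antipode allows one to ``strip off'' the $H$-action on the $L$-argument of the bracket, reducing to the ideal property of $I$ already assumed.
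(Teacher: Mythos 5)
Your proof is correct and follows essentially the same route as the paper: both rest on the identity $[ha,b]=h_{(1)}[a,(Sh_{(2)})b]$ obtained from the module-algebra axiom together with coassociativity, the antipode axiom, and the counit axiom. You merely read the computation in the opposite direction and add the (trivial but worth stating) observation that $H(HI)\subseteq HI$, which the paper leaves implicit.
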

\begin{proof} Suppose $a \in I$, $h\in H$, $b \in L$. Then
\begin{equation}\label{EqPerebros}
[ha, b] = [h_{(1)}a, \varepsilon(h_{(2)})b]
= [h_{(1)}a, h_{(2)} (Sh_{(3)}) b] = h_{(1)}[a, (Sh_{(2)})b].
\end{equation}
Thus $[ha, b]\in HI$.
\end{proof}

Now we can prove

\begin{theorem}\label{TheoremRadicals}
Let $L$ be a finite dimensional $H$-module Lie algebra over a field $F$ of characteristic $0$
and $H$ be a finite dimensional (co)semisimple Hopf algebra. Then the solvable and
the nilpotent radicals $R$ and $N$ of $L$ are $H$-invariant.
\end{theorem}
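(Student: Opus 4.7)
I would prove the $H$-invariance of $N$ first and then deduce that of $R$ from it using the classical fact $[L,R]\subseteq N$.

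For $N$, let $A$ be the associative subalgebra of $\End_F L$ generated by $\ad L$, as in Lemma \ref{LemmaNJadL}. Since $L$ is an $H$-module, $\End_F L$ carries the standard $H$-module algebra structure $(h\cdot f)(v)=h_{(1)}f(S(h_{(2)})v)$, and (\ref{EqPerebros}) from the proof of Lemma \ref{LemmaHI} yields $\ad(ha)=h\cdot \ad a$. Hence $\ad L$ is $H$-invariant in $\End_F L$, and since products of $H$-invariant subspaces in an $H$-module algebra are $H$-invariant, the finite dimensional subalgebra $A$ is itself an $H$-module algebra. By V.~Linchenko's theorem, $J(A)$ is $H$-invariant; combining this with the inclusion $\ad N \subseteq J(A)$ from Lemma \ref{LemmaNJadL} gives $\ad(HN)=H\cdot \ad N\subseteq J(A)$. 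Since $J(A)$ is a nilpotent two-sided ideal of $A$, there is $n$ such that $\ad a_1\cdots\ad a_n=0$ for all $a_i\in HN$, which forces the lower central series of the $H$-invariant ideal $HN$ (Lemma \ref{LemmaHI}) to terminate. Therefore $HN$ is nilpotent, $HN\subseteq N$, and $N$ is $H$-invariant.

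For $R$, I would apply Lemma \ref{LemmaHI} to obtain the $H$-invariant ideal $HR$ and use the characterization that a finite dimensional Lie algebra $M$ is solvable whenever $[M,M]$ is nilpotent. The identity (\ref{EqPerebros}) gives
$$[ha,b]=h_{(1)}\bigl[a,S(h_{(2)})b\bigr]$$
for $a\in R$, $b\in L$, $h\in H$. The inner bracket lies in $[R,L]\subseteq N$ (the standard fact that $[L,R]$ is a nilpotent ideal in characteristic $0$), so by the $H$-invariance of $N$ already established one has $[ha,b]\in H\cdot N=N$. Consequently $[HR,L]\subseteq N$, and in particular $[HR,HR]$ is a Lie subalgebra of the nilpotent ideal $N$, hence itself nilpotent. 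Therefore $HR$ is solvable, $HR\subseteq R$, and $R$ is $H$-invariant.

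The delicate step is the first paragraph: one must check carefully that the given formula really defines an $H$-module algebra structure on $\End_F L$, that the subalgebra $A$ is stable under this action, and that the hypotheses of Linchenko's theorem are satisfied. Once $\ad(HN)\subseteq J(A)$ is established, the rest proceeds routinely from the two classical facts $[L,R]\subseteq N$ and ``$[M,M]$ nilpotent implies $M$ solvable.''
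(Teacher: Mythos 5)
Your proposal is correct and follows essentially the same route as the paper: the $H$-module structure on $\End_F L$, the inclusion $\ad N\subseteq J(A)$ from Lemma~\ref{LemmaNJadL}, Linchenko's stability of $J(A)$ to get $\ad(HN)\subseteq J(A)$ and hence $HN=N$, and then $[HR,HR]\subseteq H[R,L]\subseteq HN=N$ via (\ref{EqPerebros}) and $[L,R]\subseteq N$ to get $HR=R$. The only detail the paper makes explicit that you leave implicit is the appeal to the Larson--Radford theorem to pass from the ``(co)semisimple'' hypothesis to the semisimplicity required by Linchenko's theorem.
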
 

\begin{proof}
We have a natural $H$-action on $\End_F(L)$:
$(h\psi)(a) := h_{(1)} \psi((Sh_{(2)})a)$
for $a \in L$, $h \in H$, $\psi \in \End_F(L)$.
Then $\ad$ becomes a homomorphism of $H$-modules.
Denote by $A$ the associative subalgebra of $\End_F(L)$ generated by $(\ad L)$.
Then $A$ is an $H$-submodule.
By Lemma~\ref{LemmaHI}, $HN$ and $HR$ are ideals of $L$.
Lemma~\ref{LemmaNJadL} implies $(\ad (HN)) \subseteq HJ(A)$.
 By the Larson~--- Radford theorem, $H$  is cosemisimple if and only if it is semisimple (see e.g. \cite[Theorem 7.4.6]{Danara}). Hence,
by~\cite{LinchenkoJH}, $HJ(A)=J(A)$. Thus $HN$ is nilpotent and $HN = N$.

By \cite[Proposition 2.1.7]{GotoGrosshans},
 $[L, R] \subseteq N$. Together with~(\ref{EqPerebros})
 this implies $$[HR, HR] \subseteq [HR, L] \subseteq H [R, HL] \subseteq H[R,L] \subseteq HN = N.$$
 Thus $HR$ is solvable and $HR = R$.
\end{proof}
\begin{corollary}
Let $L$ be a finite dimensional $H$-comodule Lie algebra over a field $F$ of characteristic $0$
and $H$ be a finite dimensional (co)semisimple Hopf algebra. Then the solvable and
the nilpotent radicals $R$ and $N$ of $L$ are $H$-subcomodules in $L$.
\end{corollary}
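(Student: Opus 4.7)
The plan is to deduce the corollary from Theorem~\ref{TheoremRadicals} by the standard duality between finite-dimensional comodule and module structures that was recalled at the end of Section~1.4. Since $\dim H < +\infty$, the dual space $H^* = \Hom_F(H,F)$ carries the structure of a Hopf algebra, and I would first check that $H^*$ inherits the hypothesis of Theorem~\ref{TheoremRadicals}: in characteristic $0$ the Larson--Radford theorem guarantees that semisimplicity and cosemisimplicity coincide for finite-dimensional Hopf algebras, and under finite-dimensional duality the cosemisimplicity of $H$ transfers to the semisimplicity of $H^*$ (and conversely). Thus $H^*$ is again a finite-dimensional (co)semisimple Hopf algebra.

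Next I would turn $L$ into an $H^*$-module Lie algebra via the formula $h^*\cdot a = h^*(a_{(1)})a_{(0)}$ already written in the excerpt. A short check using $\rho([a,b]) = [a_{(0)},b_{(0)}] \otimes a_{(1)}b_{(1)}$ and $(h^*)_{(1)}(a_{(1)})(h^*)_{(2)}(b_{(1)}) = h^*(a_{(1)}b_{(1)})$ shows that the $H$-comodule Lie algebra axiom is equivalent to the $H^*$-module Lie algebra axiom $h^*([a,b]) = [(h^*)_{(1)}a,(h^*)_{(2)}b]$. Hence $L$ satisfies the hypotheses of Theorem~\ref{TheoremRadicals} with $H$ replaced by $H^*$, and I may conclude that $R$ and $N$ are $H^*$-invariant subspaces of $L$.

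It remains to translate $H^*$-invariance back into the coaction language. For finite-dimensional $H$ and a finite-dimensional $H$-comodule $L$, a subspace $W\subseteq L$ is $H^*$-invariant if and only if $\rho(W)\subseteq W \otimes H$, i.e.\ $W$ is an $H$-subcomodule; this is a routine consequence of choosing a dual basis of $H$ and $H^*$ and expanding $\rho$. Applying this to $W=R$ and $W=N$ finishes the proof.

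The argument is essentially a dictionary translation, so there is no real obstacle; the only point demanding explicit verification is that $H^*$ is (co)semisimple, which I would dispose of by citing the Larson--Radford theorem exactly as in Example~\ref{ExampleHSS}.
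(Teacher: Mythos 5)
Your proposal is correct and is exactly the paper's argument: the paper's proof of this corollary is the one-line invocation of the duality between $H$-coactions and $H^*$-actions, which you have simply spelled out (including the Larson--Radford point that $H^*$ remains (co)semisimple and the routine check that $H^*$-invariance of a subspace is equivalent to its being an $H$-subcomodule when $\dim H<+\infty$). No gaps.
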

\begin{proof}
We use the duality between $H$-coactions and $H^*$-actions.
\end{proof}
\begin{corollary}
Let $L$ be a finite dimensional Lie algebra over a field $F$ of characteristic $0$,
graded by a finite group~$G$. Then the solvable and
the nilpotent radicals $R$ and $N$ of $L$ are graded.
\end{corollary}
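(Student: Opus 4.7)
The plan is to reduce this corollary to the previous one (the $H$-comodule version) by realizing a $G$-grading as an $FG$-comodule structure. First, I would invoke Example~\ref{ExampleGraded}: since $L=\bigoplus_{g\in G} L^{(g)}$, the map $\rho(a^{(g)}) := a^{(g)}\otimes g$ turns $L$ into an $FG$-comodule Lie algebra. Moreover, by construction, a subspace $W\subseteq L$ is graded if and only if $\rho(W)\subseteq W\otimes FG$, i.e.\ if and only if $W$ is an $FG$-subcomodule of $L$. So the statement ``$R$ and $N$ are graded'' is literally the statement ``$R$ and $N$ are $FG$-subcomodules.''

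Next I would check the hypotheses of the preceding corollary for $H := FG$. Since $G$ is finite, $\dim FG = |G| < \infty$. Since $F$ has characteristic $0$ and $|G|$ is finite, Maschke's theorem gives that $FG$ is semisimple, and then by the Larson~--- Radford theorem (invoked in the proof of Example~\ref{ExampleHSS}) it is also cosemisimple. Thus $H=FG$ is a finite dimensional (co)semisimple Hopf algebra, and the previous corollary applies and yields that $R$ and $N$ are $FG$-subcomodules, hence graded.

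There is no real obstacle here; the content is entirely in checking that the hypotheses match. The only tiny subtlety worth mentioning explicitly is the identification of graded subspaces with $FG$-subcomodules in both directions, which is an immediate consequence of the fact that the elements of $G$ form a basis of $FG$: if $\rho(W)\subseteq W\otimes FG$ and $w=\sum_g w^{(g)}\in W$ with $w^{(g)}\in L^{(g)}$, then writing $\rho(w)=\sum_g w^{(g)}\otimes g$ and comparing coefficients shows each $w^{(g)}\in W$, so $W=\bigoplus_{g\in G}(L^{(g)}\cap W)$. The converse is obvious. With this identification, nothing further needs to be proved.
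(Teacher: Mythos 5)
Your proposal is correct and is essentially the paper's own argument: the paper likewise reduces to the preceding $H$-comodule corollary by combining Example~\ref{ExampleGraded} (the $FG$-comodule structure, under which graded subspaces are exactly $FG$-subcomodules) with the fact that $FG$ is a finite dimensional (co)semisimple Hopf algebra for finite $G$ in characteristic $0$ (the paper gets this from the explicit integral of Example~\ref{ExampleFG}, you from Maschke plus Larson--Radford --- an immaterial difference).
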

\begin{proof}
We use Examples~\ref{ExampleFG} and~\ref{ExampleGraded}.
\end{proof}
\begin{remark}
If $F$ is algebraically closed, then $R$ is graded even if $G$ is infinite
(see~\cite[Proposition 3.3 and its proof]{PaReZai}).
\end{remark}
\begin{remark}
Let $L$ and $\mathfrak g$ be Lie algebras over a field of characteristic $0$. Suppose
$\mathfrak g$ is acting on $L$ by derivations and $\dim L < + \infty$.
Then $L$ is an $U(\mathfrak g)$-module algebra (see Example~\ref{ExampleUgModule}).
Note that $U(\mathfrak g)$ is infinite dimensional. Moreover,
by Example~\ref{ExampleUnivEnv}, $U(\mathfrak g)$ does not have nonzero integrals
and, therefore, is not cosemisimple. However, by~\cite[Chapter~III, Section~6, Theorem~7]{JacobsonLie},
$\delta(R) \subseteq N$ for all $\delta \in \Der(L)$. Hence $R$ and $N$ are 
$U(\mathfrak g)$-submodules.
\end{remark}

We conclude the section with an example of an $H$-module Lie algebra
with unstable radicals.
\begin{example}\label{ExampleSweedlerNonStableRadical}
Let $H=\langle 1, g, x, gx \rangle_F$ be the 4-dimensional Sweedler's Hopf algebra
over a field $F$ of characteristic $0$.
Here $g^2=1$, $x^2=0$, $xg=-gx$, $\Delta(g)=g\otimes g$, $\Delta(x)=g\otimes x + x \otimes 1$,
$\varepsilon(g)=1$, $\varepsilon(x)=0$, $S(g)=g$, $S(x)=-gx$. Note that $J(H)=\langle x, gx \rangle_F
\ne 0$, i.e. $H$ is not semisimple.
Let $V$ be a three-dimensional vector space. Fix some linear
isomorphism $\varphi \colon \mathfrak{sl}_2(F) \to V$. Consider
the Lie algebra $L = \mathfrak{sl}_2(F)\oplus V$ 
with the Lie commutator $$[a+\varphi(b), c + \varphi(u)]=[a,c]+\varphi([a,u]+[b,c])
\text{ where } a,b,c,u \in \mathfrak{sl}_2(F),$$
i.e. $V$ is an Abelian ideal of $L$ that coincides with the solvable and the nilpotent radicals
of $L$. Define $g(a+\varphi(b))=a-\varphi(b)$ and $x(a+\varphi(b))=b$
where $a,b \in \mathfrak{sl}_2(F)$.
Then $L$ is an $H$-module Lie algebra, however $V$ is not $H$-invariant.
\end{example}

\section{Levi decompositions for $H$-comodule Lie algebras}\label{SectionHcoLevi}

First we recall the basic concepts of Lie algebra cohomology. For the details we refer the reader
to e.g.~\cite{GotoGrosshans, Postnikov}.

Let  $\psi \colon L \to \mathfrak{gl}(V)$ be a representation
of a Lie algebra $L$ on some vector space $V$ over a field $F$.
Denote by $C^k(L; V) \subseteq \Hom_F(L^{{}\otimes k}; V)$, $k \in \mathbb N$, the subspace
of all alternating multilinear maps, $C^0(L; V) := V$.
Recall that the elements of $C^k(L; V)$ are called \textit{$k$-cochains}
with coefficients in $V$.  The \textit{coboundary operators} $\partial \colon C^k(L;V) \to C^{k+1}(L; V)$
are defined on these spaces in such a way that $\partial^2 = 0$.
The elements of the subspace $$Z^k(L;\psi):= \ker(\partial \colon C^k(L;V) \to C^{k+1}(L; V))\subseteq C^k(L;V)$$
are called \textit{$k$-cocycles} and the elements of the subspace $$B^k(L;\psi):= \partial(C^{k-1}(L;V)) \subseteq C^k(L;V)$$
are called \textit{$k$-coboundaries}. The space $H^k(L;\psi) := Z^k(L;\psi)/B^k(L;\psi)$
is called the \textit{$k$th cohomology group}.

Suppose $L$ is an $H$-comodule algebra for some Hopf algebra $H$.
 Consider a representation $\psi \colon L \to \mathfrak{gl}(V)$.
 We say that $(V, \psi)$ is an \textit{$(H,L)$-module} if $V$ is an $H$-comodule
 and $$\rho_V(\psi(a)v)=\psi(a_{(0)}) v_{(0)}\otimes a_{(1)}v_{(1)} \text{ for all } a \in L,\ v \in V$$
 where $\rho_V \colon V \to V \otimes H$ is the comodule map.
 We say that $(V, \psi)$ is a \textit{symmetric $(H,L)$-module} 
 if $$\rho_V(\psi(a)v)=\psi(a_{(0)}) v_{(0)}\otimes a_{(1)}v_{(1)} =
 \psi(a_{(0)}) v_{(0)}\otimes v_{(1)}a_{(1)} \text{ for all } a \in L,\ v \in V.$$
\begin{example}
If $L$ is an $H$-comodule Lie algebra, then the adjoint representation
$\ad \colon L \to \mathfrak{gl}(L)$ defines on $L$
the structure of a symmetric $(H,L)$-module
since $$\rho((\ad a)b)=\rho([a,b])= -\rho([b,a]) =-[b_{(0)}, a_{(0)}]\otimes b_{(1)}a_{(1)} =
 (\ad a_{(0)}) b_{(0)}\otimes b_{(1)}a_{(1)}$$ for all $a,b \in L$.
\end{example}

Denote by $\tilde C^k(L; V)$ the subspace of \textit{$H$-colinear cochains}, i.e. such maps
 $\omega \in C^k(L; V)$ that $$\rho_V(\omega(a_1, a_2, \ldots, a_k))
 =\omega({a_1}_{(0)}, {a_2}_{(0)}, \ldots, {a_k}_{(0)})\otimes
 {a_1}_{(1)} {a_2}_{(1)} \ldots {a_k}_{(1)} \text{ for all }a_i \in L.$$
  
 If $(V,\psi)$ is an $(H,L)$-module and $H$ is commutative, then, clearly, the coboundary of an
 $H$-colinear cochain is again an $H$-colinear cochain. However, for $1$-cochains and a symmetric
 $(H,L)$-module $(V,\psi)$ this is true even if $H$ is not commutative.

\begin{lemma}\label{LemmaHCoboundary}
If $(V,\psi)$ is a symmetric $(H,L)$-module, then $\partial(\tilde C^1(L;V)) 
\subseteq \tilde C^2(L;V)$.
\end{lemma}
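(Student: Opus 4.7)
The plan is to verify directly that for any $\omega \in \tilde C^1(L;V)$, the 2-cochain $\partial\omega$ satisfies the $H$-colinearity condition. Recall that for a representation $\psi$, the coboundary of a $1$-cochain is
\[(\partial\omega)(a,b) = \psi(a)\omega(b) - \psi(b)\omega(a) - \omega([a,b]),\]
so I want to show that $\rho_V\bigl((\partial\omega)(a,b)\bigr) = (\partial\omega)(a_{(0)}, b_{(0)}) \otimes a_{(1)} b_{(1)}$ for all $a,b \in L$. The approach is to apply $\rho_V$ to each of the three summands separately, using the $(H,L)$-module axiom, the $H$-colinearity of $\omega$, and the $H$-comodule axiom $\rho([a,b]) = [a_{(0)}, b_{(0)}] \otimes a_{(1)} b_{(1)}$ for $L$.

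For the first summand, the $(H,L)$-module condition gives $\rho_V(\psi(a)\omega(b)) = \psi(a_{(0)})\omega(b)_{(0)} \otimes a_{(1)}\omega(b)_{(1)}$, and then the $H$-colinearity of $\omega$ rewrites this as $\psi(a_{(0)})\omega(b_{(0)}) \otimes a_{(1)} b_{(1)}$, which is exactly what is needed. For the third summand, $H$-colinearity of $\omega$ together with the comodule-algebra axiom of $L$ gives $\rho_V(\omega([a,b])) = \omega([a_{(0)}, b_{(0)}]) \otimes a_{(1)} b_{(1)}$, again matching the target.

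The second summand is where the obstacle lives: computing $\rho_V(\psi(b)\omega(a))$ by the naive $(H,L)$-module formula yields $\psi(b_{(0)})\omega(a_{(0)}) \otimes b_{(1)} a_{(1)}$, and the factors of $H$ appear in the wrong order. If $H$ were commutative this would match $a_{(1)} b_{(1)}$ automatically, but in general it does not. This is precisely where the \emph{symmetric} hypothesis on $(V,\psi)$ is used: the second equality in the definition of a symmetric $(H,L)$-module asserts that $\rho_V(\psi(a)v) = \psi(a_{(0)}) v_{(0)} \otimes v_{(1)} a_{(1)}$, i.e.\ the factors in $H$ may be swapped. Applying this form to $\psi(b)\omega(a)$ and using $H$-colinearity of $\omega$ gives $\psi(b_{(0)})\omega(a_{(0)}) \otimes a_{(1)} b_{(1)}$, the desired expression.

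Combining the three computations, the contribution from $a_{(1)} b_{(1)}$ factors out of all three terms, so
\[\rho_V\bigl((\partial\omega)(a,b)\bigr) = \bigl(\psi(a_{(0)})\omega(b_{(0)}) - \psi(b_{(0)})\omega(a_{(0)}) - \omega([a_{(0)}, b_{(0)}])\bigr) \otimes a_{(1)} b_{(1)} = (\partial\omega)(a_{(0)}, b_{(0)}) \otimes a_{(1)} b_{(1)},\]
which proves that $\partial\omega \in \tilde C^2(L;V)$. The main subtlety is entirely in the second term: without the symmetric assumption one could not interchange $b_{(1)} a_{(1)}$ and $a_{(1)} b_{(1)}$, and this is exactly why for higher $k$ the analogous statement would require commutativity of $H$.
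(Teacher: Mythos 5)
Your proof is correct and follows essentially the same route as the paper: the first and third terms are handled by the $(H,L)$-module axiom, $H$-colinearity of $\omega$, and the comodule-algebra identity for $[a,b]$, while the second term is resolved by invoking the second (swapped) equality in the definition of a symmetric $(H,L)$-module so that $\omega(a)_{(1)}b_{(1)}$ becomes $a_{(1)}b_{(1)}$ after colinearity of $\omega$. Your closing remark correctly pinpoints why the argument is special to $1$-cochains and would require commutativity of $H$ in higher degree.
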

\begin{proof}
Let $\omega \in \tilde C^1(L;V)$.
Then $$(\partial\omega)(x,y)
:=\psi(x)\omega(y)-\psi(y)\omega(x) - \omega([x,y])$$
and $$\rho_V((\partial\omega)(x,y))= $$ $$\psi(x_{(0)})\omega(y)_{(0)}
\otimes x_{(1)}\omega(y)_{(1)} -\psi(y_{(0)})\omega(x)_{(0)}
\otimes {\omega(x)_{(1)}} y_{(1)} - \omega([x,y]_{(0)}) \otimes [x,y]_{(1)}
=$$ $$ \psi(x_{(0)})\omega(y_{(0)})
\otimes x_{(1)}y_{(1)} -\psi(y_{(0)})\omega(x_{(0)})
\otimes x_{(1)}y_{(1)} - \omega([x_{(0)},y_{(0)}]) \otimes x_{(1)}y_{(1)}
=$$ $$(\partial\omega)(x_{(0)},y_{(0)}) \otimes x_{(1)}y_{(1)}.$$
\end{proof}

Let $\tilde Z^2(L; \psi) := Z^2(L; \psi) \cap \tilde C^2(L;V)$
and $\tilde B^2(L; \psi) := \partial(\tilde C^1(L;V))$.
Lemma~\ref{LemmaHCoboundary} enables us to define
\textit{the second $H$-colinear cohomology group} $\tilde H^2(L; \psi) :=
 \tilde Z^2(L; \psi)/\tilde B^2(L; \psi)$.
 
In~\cite{Taft} E.\,J.~Taft used the original Maschke
trick in order to replace non-invariant maps with invariant ones.
In~\cite{SteVanOyst} D. \c Stefan and F. Van Oystaeyen used Maschke's
trick adapted for Hopf algebras with a left integral.
We use the last one too.
 
 \begin{lemma}\label{LemmaHcolinear}
 Let $r \colon V \to W$ be a linear map where $V$ and $W$ are $H$-comodules for a Hopf algebra $H$. Let $t \in H^*$ be a left integral on $H$. Then $\tilde r \colon V \to W$ where
 $$\tilde r(x) = t\bigl(r(x_{(0)})_{(1)}S(x_{(1)})\bigr)r(x_{(0)})_{(0)} \text{ for } x \in V,$$ is an $H$-colinear map. If, in addition, $\pi \circ r = \id_V$ for some
 $H$-colinear map $\pi \colon W \to V$ and $t(1)=1$, then $\pi \circ \tilde r = \id_V$ too.
 \end{lemma}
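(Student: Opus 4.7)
The plan is to carry out a Maschke-style averaging argument adapted to the Hopf setting, as in~\cite{SteVanOyst}: both statements reduce to careful Sweedler computations using the comodule coassociativity axiom, the antipode axiom, and the defining property of the left integral.

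For the $H$-colinearity of $\tilde r$, I would expand $\rho_W(\tilde r(x))$ from the definition and push $\rho_W$ onto $r(x_{(0)})_{(0)}$, applying the comodule coassociativity $\rho_W(y_{(0)}) \otimes y_{(1)} = y_{(0)} \otimes \Delta(y_{(1)})$ with $y = r(x_{(0)})$. Separately I would expand $\tilde r(x_{(0)}) \otimes x_{(1)}$ and use coassociativity on $V$, $(x_{(0)})_{(0)} \otimes (x_{(0)})_{(1)} \otimes x_{(1)} = x_{(0)} \otimes \Delta(x_{(1)})$. Matching the two expressions in $W \otimes H$ reduces, after tensoring out $w_{(0)}$, to the key $H$-valued identity
\[ \sum t(w_{(1)(2)}\, S(h))\, w_{(1)(1)} = \sum t(w_{(1)}\, S(h_{(1)}))\, h_{(2)} \qquad (w \in W,\ h \in H), \]
where $w_{(1)(1)} \otimes w_{(1)(2)} := \Delta(w_{(1)})$.

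To prove this identity, I would apply the left integral property $\sum t(c_{(2)})\, c_{(1)} = t(c)\, 1_H$ to $c = w_{(1)}\, S(h_{(1)})$, using $\Delta \circ S = (S \otimes S) \circ \Delta^{\mathrm{op}}$ to unfold $\Delta(c)$; this produces
\[ \sum t(w_{(1)(2)}\, S(h_{(1)}))\, w_{(1)(1)}\, S(h_{(2)}) = t(w_{(1)}\, S(h))\, 1_H. \]
Iterating $\Delta$ once more on $h$, tensoring with the extra Sweedler factor, and then applying multiplication $m \colon H \otimes H \to H$ yields an equation that collapses via the antipode-counit consequence $\sum h_{(1)} \otimes S(h_{(2)})\, h_{(3)} = h \otimes 1_H$ into exactly the key identity.

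For the retract property, I would apply $\pi$ to $\tilde r(x)$; the $H$-colinearity of $\pi$ together with $\pi \circ r = \id_V$ allows me to replace $\pi(r(x_{(0)})_{(0)}) \otimes r(x_{(0)})_{(1)}$ by $(x_{(0)})_{(0)} \otimes (x_{(0)})_{(1)} = \rho_V(x_{(0)})$. Coassociativity on $V$ then turns $\pi\tilde r(x)$ into $\sum t((x_{(1)})_{(1)}\, S((x_{(1)})_{(2)}))\, x_{(0)}$, and the antipode axiom $\sum h_{(1)}\, S(h_{(2)}) = \varepsilon(h)\, 1_H$ combined with $t(1) = 1$ collapses this to $\varepsilon(x_{(1)})\, x_{(0)} = x$ by the comodule counit axiom. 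The main obstacle is the Sweedler bookkeeping of the first part, particularly spotting the correct substitution $c = w_{(1)}\, S(h_{(1)})$ in the left integral property and then disentangling the resulting three-fold Sweedler expression using the antipode-counit identity; once that $H$-valued identity is isolated, both assertions fall out routinely.
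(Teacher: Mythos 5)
Your proposal is correct and follows essentially the same route as the paper: both parts come down to the same Maschke-style computation combining the left integral property applied to the product $r(x_{(0)})_{(1)}S(x_{(1)})$, the anti-comultiplicativity of $S$, the antipode--counit axiom, and comodule coassociativity. Your isolation of the intermediate $H$-valued identity is just a repackaging of the paper's inline Sweedler calculation (note only the small notational slip that your displayed consequence of the integral property corresponds to substituting $w_{(1)}S(h)$ rather than $w_{(1)}S(h_{(1)})$), and your retract argument is identical to the paper's.
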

 \begin{remark} If $G$ is an arbitrary group and $H=FG$, then $V=\bigoplus_{g \in G}V^{(g)}$ and $W=\bigoplus_{g \in G}W^{(g)}$ are graded spaces.
 Suppose $t$ is from Example~\ref{ExampleFG}.
 Then $\tilde r(x) = \sum_{g \in G} p_{W,g}\ r(p_{V,g} x)$ for $x \in V$
 and this is a graded map.
 Here $p_{V,g}$ is the projection of $V$ on $V^{(g)}$
 along $\bigoplus_{\substack{h \in G, \\ h \ne g}} V^{(h)}$ and
 $p_{W,g}$ is the projection of $W$ on $W^{(g)}$
 along $\bigoplus_{\substack{h \in G, \\ h \ne g}} W^{(h)}$.
 \end{remark}
 \begin{proof}[Proof of Lemma~\ref{LemmaHcolinear}]
 Note that $$
 \tilde r(x_{(0)}) \otimes x_{(1)}=
 t\bigl(r(x_{(0)})_{(1)}S(x_{(1)})\bigr)\ r(x_{(0)})_{(0)} \otimes x_{(2)}
 = $$
 $$r(x_{(0)})_{(0)} \otimes \Bigl(t\bigl(r(x_{(0)})_{(1)}S(x_{(1)})\bigr)1\Bigr)x_{(2)}=
 $$
 $$
 r(x_{(0)})_{(0)} \otimes t\bigl(r(x_{(0)})_{(1)(2)}(Sx_{(1)})_{(2)}\bigr)\ r(x_{(0)})_{(1)(1)}(Sx_{(1)})_{(1)}
  x_{(2)} =$$
  $$ r(x_{(0)})_{(0)} \otimes t\bigl(r(x_{(0)})_{(2)}S(x_{(1)})\bigr)\ r(x_{(0)})_{(1)}(Sx_{(2)})x_{(3)}
  = $$ 
  $$t\bigl(r(x_{(0)})_{(2)}S(x_{(1)})\bigr)\ r(x_{(0)})_{(0)} \otimes r(x_{(0)})_{(1)}
  = \rho_W(\tilde r(x)).$$
 Thus $\tilde r$ is $H$-colinear and the first part of the lemma is proved.
 
 Suppose $\pi \circ r = \id_V$ for some
 $H$-colinear map $\pi \colon W \to V$. Let $x \in V$.
 Then $$(\pi \circ \tilde r)(x)=
 t\bigl(r(x_{(0)})_{(1)}S(x_{(1)})\bigr)\ \pi\bigl(r(x_{(0)})_{(0)}\bigr)
 = t\bigl((\pi\circ r)(x_{(0)})_{(1)}S(x_{(1)})\bigr)\ (\pi\circ r)(x_{(0)})_{(0)}
 =$$ $$ t\bigl(x_{(0)(1)}S(x_{(1)})\bigr)\ x_{(0)(0)} =
 t\bigl(x_{(1)}S(x_{(2)})\bigr)\ x_{(0)} = t(1) x = x.$$
 \end{proof}

\begin{lemma}\label{LemmaH2zero}
Let $(V,\psi)$ be a finite dimensional symmetric $(H,L)$-module
where $L$ is a finite dimensional $H$-comodule semisimple Lie algebra over a field $F$ of characteristic $0$ and $H$ is a Hopf algebra with an $\ad$-invariant left integral $t \in H^*$, $t(1)=1$.
 Then $\tilde H^2(L; \psi) = 0$.
\end{lemma}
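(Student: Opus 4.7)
The plan is to reduce to the classical Whitehead second lemma and then equivariantize the resulting cochain using the Maschke-type averaging of Lemma~\ref{LemmaHcolinear}.

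First, since $L$ is a finite dimensional semisimple Lie algebra over a field of characteristic $0$ and $(V,\psi)$ is a finite dimensional $L$-module, Whitehead's second lemma gives $H^2(L;\psi)=0$. Hence for the given $\omega \in \tilde Z^2(L;\psi) \subseteq Z^2(L;\psi)$ there is some $1$-cochain $\eta \in C^1(L;V)$, not necessarily $H$-colinear, with $\partial \eta = \omega$. Applying the construction of Lemma~\ref{LemmaHcolinear} to $r := \eta \colon L \to V$ produces
$$\tilde \eta(x) := t\bigl(\eta(x_{(0)})_{(1)}\, S(x_{(1)})\bigr)\, \eta(x_{(0)})_{(0)},$$
and by that lemma $\tilde \eta \in \tilde C^1(L;V)$.

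It remains to show $\partial \tilde \eta = \omega$. For this I introduce the analogous averaging operator on $2$-cochains: for $\xi \in C^2(L;V)$ set
$$\tilde \xi(x,y) := t\bigl(\xi(x_{(0)},y_{(0)})_{(1)}\, S(x_{(1)}y_{(1)})\bigr)\, \xi(x_{(0)},y_{(0)})_{(0)}.$$
Two facts need to be checked. First, if $\xi \in \tilde C^2(L;V)$ then $\tilde \xi = \xi$: using $H$-colinearity of $\xi$ and coassociativity, the above rewrites as $t\bigl(x_{(1)}y_{(1)} S(x_{(2)}y_{(2)})\bigr)\, \xi(x_{(0)},y_{(0)}) = t(1)\xi(x,y) = \xi(x,y)$. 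Second, the averaging intertwines the coboundary: $\widetilde{\partial \eta} = \partial \tilde \eta$.

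The heart of the argument is this second intertwining. One expands
$$(\partial \tilde\eta)(x,y) = \psi(x)\tilde\eta(y) - \psi(y)\tilde\eta(x) - \tilde\eta([x,y]),$$
and processes each term. For the last term, the $H$-comodule Lie algebra identity $\rho([x,y]) = [x_{(0)},y_{(0)}]\otimes x_{(1)}y_{(1)}$ substitutes directly into the definition of $\tilde\eta$. For the first two terms, one moves the scalar $t(\cdot)$ outside the $\psi$ action and then applies the symmetric $(H,L)$-module identity
$$\rho_V(\psi(a)v) = \psi(a_{(0)})v_{(0)}\otimes a_{(1)}v_{(1)} = \psi(a_{(0)})v_{(0)}\otimes v_{(1)}a_{(1)}$$
to bring the $H$-component into the correct cyclic position. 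Using $S(uv)=S(v)S(u)$, the antipode and counit axioms, and finally the $\ad$-invariance $t(a_{(1)} b\, S(a_{(2)})) = \varepsilon(a)t(b)$, the three terms collapse into $t\bigl((\partial\eta)(x_{(0)},y_{(0)})_{(1)} S(x_{(1)}y_{(1)})\bigr)(\partial\eta)(x_{(0)},y_{(0)})_{(0)}$, which is precisely $\widetilde{\partial \eta}(x,y)$. Combining the two facts and using that $\omega=\partial \eta$ is $H$-colinear, one obtains $\partial \tilde\eta = \widetilde{\partial\eta} = \tilde \omega = \omega$, so $\omega \in \tilde B^2(L;\psi)$ and $\tilde H^2(L;\psi)=0$.

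The main obstacle will be the intertwining calculation. The symmetric (as opposed to merely $(H,L)$-)module hypothesis is essential precisely so that both orders $a_{(1)}v_{(1)}$ and $v_{(1)}a_{(1)}$ are at hand when one factors $S(x_{(1)}y_{(1)}) = S(y_{(1)})S(x_{(1)})$ and needs to maneuver the antipode factors into the cyclic position required for the $\ad$-invariance of $t$ to apply.
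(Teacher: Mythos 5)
Your proposal is correct and follows essentially the same route as the paper: Whitehead's second lemma to produce a primitive $\eta$, the Maschke-type averaging of Lemma~\ref{LemmaHcolinear} to replace it by $\tilde\eta$, and then the computation showing $\partial\tilde\eta = \widetilde{\partial\eta} = \tilde\omega = \omega$, using the counit and antipode axioms, the $\ad$-invariance of $t$, the symmetric $(H,L)$-module identity, and finally the $H$-colinearity of $\omega$ together with $t(1)=1$. The paper carries out this intertwining computation in full rather than isolating it as a separate fact about the averaging operator, but the ingredients and their roles (in particular, where the symmetric hypothesis is needed) are exactly as you describe.
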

\begin{proof}
Recall that by the second Whitehead lemma (see e.g. \cite[Exercise~3.5]{GotoGrosshans}
or \cite[Lecture 19, Corollary 2]{Postnikov}), $H^2(L; \psi) = 0$.
Hence if $\omega \in \tilde Z^2(L; \psi)$, then there exists $\nu \in C^1(L; V)$
such that $\omega = \partial \nu$. Let $\tilde \nu$ be the map obtained from $\nu$
in Lemma~\ref{LemmaHcolinear}. 
Then $\tilde \nu \in \tilde C^1(L; V)$. We claim that $\partial\tilde\nu = \omega$.

Let $a,b \in L$. We have
$$(\partial\tilde\nu)(a, b) = \psi(a)\tilde\nu(b)
- \psi(b)\tilde\nu(a)-\tilde\nu([a,b])= t(\nu(b_{(0)})_{(1)} S(b_{(1)}))\ \psi(a) \nu(b_{(0)})_{(0)}
- $$
$$t(\nu(a_{(0)})_{(1)} S(a_{(1)}))\ \psi(b) \nu(a_{(0)})_{(0)}
-t(\nu([a,b]_{(0)})_{(1)} S([a,b]_{(1)}))\ \nu([a,b]_{(0)})_{(0)}=$$
$$ t(\nu(b_{(0)})_{(1)} S(b_{(1)}))\ \psi(\varepsilon(a_{(1)})a_{(0)}) \nu(b_{(0)})_{(0)}
- $$
$$t(\nu(a_{(0)})_{(1)} S(a_{(1)}))\ \psi(\varepsilon(b_{(1)})b_{(0)}) \nu(a_{(0)})_{(0)}
-t(\nu([a_{(0)},b_{(0)}])_{(1)} S(a_{(1)}b_{(1)}))\ \nu([a_{(0)},b_{(0)}])_{(0)}=$$
$$ \varepsilon(a_{(1)})t\Bigl( \nu(b_{(0)})_{(1)}S(b_{(1)})\Bigr)\ \psi(a_{(0)}) \nu(b_{(0)})_{(0)}
- $$
$$t\Bigl(\nu(a_{(0)})_{(1)}\varepsilon(b_{(1)})S(a_{(1)})\Bigr)\ \psi(b_{(0)}) \nu(a_{(0)})_{(0)}
-$$
$$t\Bigl(\nu([a_{(0)},b_{(0)}])_{(1)}S(a_{(1)}b_{(1)})\Bigr)\ \nu([a_{(0)},b_{(0)}])_{(0)}.$$

Since $t$ is $\ad$-invariant, we get
$$(\partial\tilde\nu)(a, b) =  t\Bigl(a_{(1)} \nu(b_{(0)})_{(1)}(Sb_{(1)})S(a_{(2)})\Bigr)\ \psi(a_{(0)}) \nu(b_{(0)})_{(0)}
- $$
$$t\Bigl(\nu(a_{(0)})_{(1)}b_{(1)}(Sb_{(2)})S(a_{(1)})\Bigr)\ \psi(b_{(0)}) \nu(a_{(0)})_{(0)}
-$$
$$t\Bigl(\nu([a_{(0)},b_{(0)}])_{(1)}S(a_{(1)}b_{(1)})\Bigr)\ \nu([a_{(0)},b_{(0)}])_{(0)}.$$
Since $(V, \psi)$ is a symmetric $(H,L)$-module, we have
$$(\partial\tilde\nu)(a, b) = t\Bigl(\bigl(a_{(0)(1)} \nu(b_{(0)})_{(1)}\bigr)(Sb_{(1)})S(a_{(1)})\Bigr)\ \psi(a_{(0)(0)}) \nu(b_{(0)})_{(0)}
- $$
$$t\Bigl(\bigl(\nu(a_{(0)})_{(1)}b_{(0)(1)}\bigr)(Sb_{(1)})S(a_{(1)})\Bigr)\ \psi(b_{(0)(0)}) \nu(a_{(0)})_{(0)}
-$$
$$t\Bigl(\nu([a_{(0)},b_{(0)}])_{(1)}S(a_{(1)}b_{(1)})\Bigr)\ \nu([a_{(0)},b_{(0)}])_{(0)}=$$
$$ t\Bigl(\bigl(\psi(a_{(0)}) \nu(b_{(0)})\bigr)_{(1)}(Sb_{(1)})S(a_{(1)})\Bigr)\ \bigl(\psi(a_{(0)}) \nu(b_{(0)})\bigr)_{(0)}
- $$
$$t\Bigl(\bigl(\psi(b_{(0)}) \nu(a_{(0)})\bigr)_{(1)}(Sb_{(1)})S(a_{(1)})\Bigr)\ \bigl(\psi(b_{(0)}) \nu(a_{(0)})\bigr)_{(0)}
-$$
$$t\Bigl(\nu([a_{(0)},b_{(0)}])_{(1)}S(a_{(1)}b_{(1)})\Bigr)\ \nu([a_{(0)},b_{(0)}])_{(0)}=$$
$$ t\Bigl(\bigl(\psi(a_{(0)}) \nu(b_{(0)})\bigr)_{(1)}S(a_{(1)}b_{(1)})\Bigr)\ \bigl(\psi(a_{(0)}) \nu(b_{(0)})\bigr)_{(0)}
- $$
$$t\Bigl(\bigl(\psi(b_{(0)}) \nu(a_{(0)})\bigr)_{(1)}S(a_{(1)}b_{(1)})\Bigr)\ \bigl(\psi(b_{(0)}) \nu(a_{(0)})\bigr)_{(0)}
-$$
$$t\Bigl(\nu([a_{(0)},b_{(0)}])_{(1)} S(a_{(1)}b_{(1)})\Bigr)\ \nu([a_{(0)},b_{(0)}])_{(0)}=$$
 $$
t\bigl(\omega(a_{(0)}, b_{(0)})_{(1)}S(a_{(1)}b_{(1)})\bigr)\ \omega(a_{(0)}, b_{(0)})_{(0)}
=t\bigl(a_{(1)}b_{(1)} S(a_{(2)}b_{(2)})\bigr)\ \omega(a_{(0)}, b_{(0)})=\omega(a,b)$$ since $\omega \in \tilde Z^2(L; \psi)$ and $t(1)=1$. Thus 
$\tilde Z^2(L; \psi) = \tilde B^2(L; \psi)$ and $\tilde H^2(L; \psi)=0$.
\end{proof}

Theorem~\ref{TheoremHcoLevi} is the $H$-comodule version of the Levi theorem.

\begin{theorem}\label{TheoremHcoLevi}
Let $L$ be a finite dimensional $H$-comodule Lie algebra over a field $F$ of characteristic $0$
where $H$ is a Hopf algebra.  Suppose $R$ is an $H$-subcomodule and 
there exists an $\ad$-invariant left integral $t \in H^*$ such that $t(1)=1$.
Then there exists a maximal semisimple subalgebra $B$ in $L$ such that
$L=B\oplus R$ (direct sum of $H$-subcomodules).
\end{theorem}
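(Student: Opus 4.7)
The plan is to mimic the cohomological proof of the classical Levi theorem, replacing ordinary cochains by their $H$-colinear counterparts, and to proceed by induction on the derived length $\ell$ of the solvable radical $R$. The case $\ell=0$ is trivial with $B=L$.

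In the base case $R$ is abelian. Since $R$ is an $H$-subcomodule, the canonical projection $\pi\colon L\to L/R$ is $H$-colinear and $L/R$ is a semisimple $H$-comodule Lie algebra. The adjoint action of $L$ on $R$ factors through $L/R$ (as $[R,R]=0$) and makes $R$ into an $(H,L/R)$-module; moreover, since $(L,\ad)$ is a symmetric $(H,L)$-module, the induced structure on $(R,\ad)$ over $L/R$ is a symmetric $(H,L/R)$-module as well. Starting from any linear section $r\colon L/R\to L$ of $\pi$ and applying Lemma~\ref{LemmaHcolinear}, I would produce an $H$-colinear linear section $\sigma\colon L/R\to L$. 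Then $\omega(x,y):=[\sigma(x),\sigma(y)]-\sigma([x,y])$ takes values in $R$, is the standard $2$-cocycle of the extension, and a short computation using the $H$-colinearity of $\sigma$ together with the $H$-comodule Lie algebra axiom shows $\omega\in\tilde Z^2(L/R;\ad)$. Lemma~\ref{LemmaH2zero} then furnishes $\tilde\nu\in\tilde C^1(L/R;R)$ with $\omega=\partial\tilde\nu$, and setting $B:=(\sigma-\tilde\nu)(L/R)$ gives a semisimple $H$-coinvariant Lie subalgebra of $L$ complementary to $R$, because $\sigma-\tilde\nu$ is $H$-colinear (as a difference of $H$-colinear maps) and is a Lie algebra homomorphism precisely by the cocycle equation $\omega=\partial\tilde\nu$.

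For the inductive step with $\ell(R)\geq 2$, set $R':=[R,R]$. Since $R$ is an $H$-subcomodule and $L$ is an $H$-comodule Lie algebra, $\rho([a,b])=[a_{(0)},b_{(0)}]\otimes a_{(1)}b_{(1)}\in R'\otimes H$ for all $a,b\in R$, so $R'$ is an $H$-coinvariant ideal of $L$. In $\bar L:=L/R'$ the image $R/R'$ is abelian, $H$-coinvariant, and coincides with the solvable radical of $\bar L$ (since $\bar L/(R/R')\cong L/R$ is semisimple); the base case applied to $\bar L$ yields an $H$-coinvariant semisimple complement $\bar B\subseteq\bar L$ of $R/R'$. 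Pulling back along the quotient map gives an $H$-coinvariant Lie subalgebra $L_1\subseteq L$ with $L=L_1+R$, $L_1\cap R=R'$, and solvable radical $R'$ (of derived length $\ell(R)-1$). Applying the inductive hypothesis to $L_1$ produces an $H$-coinvariant semisimple $B\subseteq L_1$ with $L_1=B\oplus R'$, and therefore $L=B\oplus R$ as required; a dimension count gives $\dim B=\dim L/R$, so $B$ is maximal semisimple.

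The main obstacle is the verification in the base case, namely that $(R,\ad)$ is genuinely a symmetric $(H,L/R)$-module and that the correction cocycle $\omega$ lands in the $H$-colinear subspace $\tilde C^2(L/R;R)$; both reduce to routine Sweedler-notation manipulations, but they are exactly what is needed to invoke Lemma~\ref{LemmaH2zero}. Once these two points are in place, the cohomological mechanism proceeds formally as in the classical proof, and the inductive reduction via $R'=[R,R]$ is then standard.
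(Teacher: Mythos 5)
Your proposal is correct and follows essentially the same route as the paper: an $H$-colinear section via Lemma~\ref{LemmaHcolinear}, the vanishing of the second $H$-colinear cohomology group (Lemma~\ref{LemmaH2zero}) applied to the symmetric $(H,L/R)$-module $R$ in the abelian case, and a reduction through $[R,R]$ for the general case. The only (immaterial) difference is that you induct on the derived length of $R$ while the paper inducts on $\dim R$.
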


\begin{proof}
We follow the outline of the proof of the original Levi theorem.

First we suppose that the solvable radical $R$ is Abelian. Let $\pi \colon L \to L/R$ be the natural projection. Note that $L/R$ is an $H$-comodule semisimple Lie algebra and
 $\pi$ is an $H$-colinear map since $R$ is an $H$-subcomodule.  Take any linear section $r \colon L/R \to L$ such that $\pi \circ r = \id_{L/R}$.
By Lemma~\ref{LemmaHcolinear}, we may assume that $r$ is $H$-colinear.

Denote $\Phi(x,y)=[r(x),r(y)]-r([x,y])$. Note that $\pi(\Phi(a,b))=[a,b]-[a,b]=0$
for all $a,b \in L/R$. Thus $\Phi \in \tilde C^2(L/R; R)$.
Let $\psi \colon L/R \to \mathfrak{gl}(R)$ be the linear map defined
by $\psi(a)(v)=[r(a), v]$, $a\in L/R$, $v\in R$. Note that
 $$\psi(a)\psi(b)(v)-\psi(b)\psi(a)(v)-\psi([a,b])(v)=[r(a), [r(b), v]]-[r(b), [r(a), v]]
 -[r([a, b]), v]=$$ $$[[r(a),r(b)],v]-[r([a, b]), v]=[\Phi(a,b), v]=0
 \text{ for all } a,b \in L/R$$ since $\Phi(a,b) \in R$ and $[R,R]=0$.
 Thus $\psi$ is a representation of $L/R$. Moreover, $(R, \psi)$
 is a symmetric $(H, L/R)$-module. Note that
  $\Phi \in \tilde Z^2(L/R; \psi)$  since $\partial \Phi = 0$.
 Hence, by Lemma~\ref{LemmaH2zero},
 $\Phi=\partial \omega$ for some $\omega \in \tilde C^1(L/R; R)$.
 Then $$[(r-\omega)(a),(r-\omega)(b)]-(r-\omega)([a,b])=$$ $$
 ([r(a),r(b)]-r([a,b]))-([r(a),\omega(b)]-[r(b),\omega(a)]-\omega([a,b]))+[\omega(a),\omega(b)]=$$ $$
 \Phi(a,b)-(\partial\omega)(a,b)+0=0$$  for all $a,b \in L/R$ and $\pi \circ (r-\omega) = \pi \circ r = \id_{L/R}$. Therefore, $(r-\omega)$ is an $H$-colinear homomorphic embedding of $L/R$ into $L$ and $L=B\oplus R$ (direct sum of $H$-subcomodules) for $B=(r-\omega)(L/R)$, i.e in this case the $H$-coinvariant Levi theorem is proved.
 
 Now prove the general case by induction on $\dim R$. The theorem has been already proved
 for the case when $[R,R]=0$. Suppose $[R,R]\ne 0$. Note that $[R,R]\ne R$ since $R$ is solvable.
 Moreover $[R,R]$ is an $H$-subcomodule. Now we consider $L/[R,R]$. Since $(L/[R,R])/(R/[R,R])\cong L/R$ is semisimple,
 $R/[R,R]$ is the solvable radical of $L/[R,R]$. We apply the induction hypothesis and
 obtain that $L/[R,R] = L_1/[R,R] \oplus R/[R,R]$ (direct sum of $H$-subcomodules)
 for some $H$-coinvariant subalgebra $L_1 \subset L$ where
 $L_1/[R,R] \cong L/R$. Now we apply the induction hypothesis to
 $L_1$ and get $L_1 = B \oplus [R,R]$ (direct sum of $H$-subcomodules)
 where $B \cong L_1/[R,R] \cong L/R$ is a semisimple subalgebra.
 Hence $L = B \oplus R$ (direct sum of $H$-subcomodules) and the theorem is proved.
 \end{proof}
 
 \section{Levi decompositions for graded and $H$-module Lie algebras}\label{SectionGradHLevi}

  Here we obtain some important consequences of Theorem~\ref{TheoremHcoLevi}.
 
\begin{theorem}\label{TheoremHLevi}
Let $L$ be a finite dimensional $H$-module Lie algebra over a field $F$ of characteristic $0$
where $H$ is a finite dimensional (co)semisimple Hopf algebra. 
Then there exists a maximal semisimple subalgebra $B$ in $L$ such that
$L=B\oplus R$ (direct sum of $H$-submodules).
\end{theorem}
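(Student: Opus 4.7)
The plan is to reduce Theorem~\ref{TheoremHLevi} to the $H$-comodule version, Theorem~\ref{TheoremHcoLevi}, via the standard duality between $H$-module and $H^*$-comodule structures that exists whenever $\dim H < +\infty$. That duality was recalled at the end of the subsection on $H$-module Lie algebras: every $H$-module algebra is an $H^*$-comodule algebra and vice versa, and $H$-submodules correspond exactly to $H^*$-subcomodules.

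First I would observe that $H^*$ is itself a finite-dimensional Hopf algebra which is (co)semisimple precisely because $H$ is (co)semisimple (by the Larson--Radford theorem and the fact that cosemisimplicity of $H$ is the semisimplicity of $H^*$). Hence Example~\ref{ExampleHSS} applied to $H^*$ produces an $\ad$-invariant left integral $t \in (H^*)^* \cong H$ with $t(1)=1$, which is exactly the hypothesis required by Theorem~\ref{TheoremHcoLevi}. Next, I would note that the solvable radical $R$ of $L$ is $H$-invariant by Theorem~\ref{TheoremRadicals}, and therefore, under the duality, it is an $H^*$-subcomodule of $L$ viewed as an $H^*$-comodule Lie algebra.

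With these two ingredients in place, I would simply invoke Theorem~\ref{TheoremHcoLevi} with the Hopf algebra $H^*$ and the integral $t \in H \cong (H^*)^*$ to obtain a maximal semisimple subalgebra $B \subseteq L$ with $L = B \oplus R$ as a direct sum of $H^*$-subcomodules. Translating back via the duality, $B$ and $R$ are $H$-submodules, and the required decomposition follows.

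There is essentially no obstacle here beyond bookkeeping: the entire content lies in the already-proved comodule version plus the stability of the radical (Theorem~\ref{TheoremRadicals}). The only small point to verify is that an $H^*$-colinear or $H^*$-coinvariant subspace of $L$ is the same thing as an $H$-submodule under the formula $h^* a = h^*(a_{(1)})a_{(0)}$, which is immediate from the finite-dimensionality of $H$ and was already stated earlier in the paper.
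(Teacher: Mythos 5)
Your proposal is correct and follows exactly the route of the paper's own (one-line) proof: apply the duality between $H$-modules and $H^*$-comodules, use Example~\ref{ExampleHSS} (for $H^*$) to supply the $\ad$-invariant integral, Theorem~\ref{TheoremRadicals} for the $H$-invariance of $R$, and then Theorem~\ref{TheoremHcoLevi}. The extra bookkeeping you spell out (cosemisimplicity of $H^*$, identification of $H$-submodules with $H^*$-subcomodules) is exactly what the paper leaves implicit.
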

\begin{proof}
Using the duality and combining Example~\ref{ExampleHSS}, Theorems~\ref{TheoremRadicals} and \ref{TheoremHcoLevi}, we obtain the theorem.
\end{proof}

D.~Pagon, D.~Repov\v s, and M.V.~Zaicev~\cite{PaReZai}
proved the graded version of the Levi theorem for a finite group.
Now we can show that this is true for an arbitrary group.

\begin{theorem}\label{TheoremGradLevi}
Let $L$ be a finite dimensional Lie algebra over a field $F$ of characteristic $0$,
graded by an arbitrary group $G$. Suppose that the solvable radical $R$ of $L$ is graded.
Then there exists a maximal semisimple subalgebra $B$ in $L$ such that
$L=B\oplus R$ (direct sum of graded subspaces).
\end{theorem}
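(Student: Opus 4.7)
The plan is to deduce this theorem directly from Theorem~\ref{TheoremHcoLevi} via the dictionary between $G$-gradings and $FG$-comodule structures. By Example~\ref{ExampleGraded}, the $G$-grading $L=\bigoplus_{g\in G} L^{(g)}$ makes $L$ into an $FG$-comodule Lie algebra with $\rho(a^{(g)})=a^{(g)}\otimes g$. Under this correspondence, graded subspaces of $L$ are exactly the $FG$-subcomodules, so the hypothesis that $R$ is graded translates to the statement that $R$ is an $FG$-subcomodule of $L$.

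Next, I would check that the Hopf algebra $H=FG$ satisfies the hypotheses needed to invoke Theorem~\ref{TheoremHcoLevi}. Example~\ref{ExampleFG} supplies precisely what is needed: the functional $t\in (FG)^*$ defined by $t(g)=\delta_{g,1}$ is an $\ad$-invariant left integral on $FG$ with $t(1)=1$. Note that we make no finiteness assumption on $G$, so this is where it becomes essential that Theorem~\ref{TheoremHcoLevi} was proved without assuming $\dim H<\infty$.

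With the hypotheses of Theorem~\ref{TheoremHcoLevi} verified, I apply it to obtain a maximal semisimple subalgebra $B\subseteq L$ such that $L=B\oplus R$ as a direct sum of $FG$-subcomodules. Translating back through the correspondence of Example~\ref{ExampleGraded}, this decomposition is a direct sum of graded subspaces, which is exactly what the theorem claims.

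There is no real obstacle here: the entire content of the theorem has already been absorbed into Theorem~\ref{TheoremHcoLevi} together with Examples~\ref{ExampleFG} and~\ref{ExampleGraded}. The only small point to keep in mind is that the theorem covers arbitrary (possibly infinite, possibly non-abelian) grading groups, which was the improvement over~\cite{PaReZai}; this is automatic from the coalgebraic formulation, since the proof of Theorem~\ref{TheoremHcoLevi} uses only the integral $t$ and never the dimension or structure of $G$.
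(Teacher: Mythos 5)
Your proposal is correct and follows exactly the paper's own argument: translate the grading into an $FG$-comodule structure via Example~\ref{ExampleGraded}, use the $\ad$-invariant integral $t$ with $t(1)=1$ from Example~\ref{ExampleFG}, and apply Theorem~\ref{TheoremHcoLevi}. Nothing further is needed.
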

\begin{proof}
We use Examples~\ref{ExampleFG}, \ref{ExampleGraded}, and Theorem~\ref{TheoremHcoLevi}.
\end{proof}

By~\cite[Proposition 3.3 and its proof]{PaReZai}, if $F$ is algebraically closed, then $R$ is graded.
Thus we obtain
\begin{corollary}
Let $L$ be a finite dimensional Lie algebra over an algebraically closed field $F$ of characteristic $0$,
graded by an arbitrary group $G$. Then there exists a maximal semisimple subalgebra $B$ in $L$ such that
$L=B\oplus R$ (direct sum of graded subspaces).
\end{corollary}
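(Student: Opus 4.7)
The plan is essentially to combine two ingredients that have already been set up in the paper. The only hypothesis of Theorem~\ref{TheoremGradLevi} that is not automatically given here is the condition that the solvable radical $R$ of $L$ be graded. So the first step is to remove that hypothesis by invoking the stability result of D.~Pagon, D.~Repov\v s, and M.V.~Zaicev~\cite[Proposition 3.3 and its proof]{PaReZai}, which says that over an algebraically closed field of characteristic $0$, the solvable radical of a finite dimensional Lie algebra graded by any group is graded. This is exactly what the remark just before the corollary reminds the reader of, so I would cite that remark (and the underlying reference) directly.

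Once the solvable radical $R$ of $L$ is known to be graded, the hypotheses of Theorem~\ref{TheoremGradLevi} are satisfied: $L$ is finite dimensional over a field of characteristic $0$, graded by the arbitrary group $G$, with $R$ graded. Applying Theorem~\ref{TheoremGradLevi} yields a maximal semisimple subalgebra $B \subseteq L$ such that $L = B \oplus R$ as a direct sum of graded subspaces, which is exactly the conclusion of the corollary.

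There is no real obstacle here: the corollary is a formal consequence of Theorem~\ref{TheoremGradLevi} together with the external stability result on the solvable radical over an algebraically closed field. The substantive work has already been done in proving Theorem~\ref{TheoremHcoLevi} (which drives Theorem~\ref{TheoremGradLevi}) and in~\cite{PaReZai}, and all that remains is to chain them together.
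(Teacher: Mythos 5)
Your proposal is correct and matches the paper's own argument exactly: the paper also invokes \cite[Proposition 3.3 and its proof]{PaReZai} to conclude that $R$ is graded over an algebraically closed field and then applies Theorem~\ref{TheoremGradLevi}. Nothing further is needed.
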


Now we apply Theorem~\ref{TheoremHcoLevi} to Lie algebras with a rational action
of an affine algebraic group.

\begin{theorem}\label{TheoremAffAlgGrLevi}
Let $L$ be a finite dimensional Lie algebra over an algebraically closed field $F$ of characteristic $0$
and let $G$ be a reductive affine algebraic group over $F$.
Suppose $L$ is endowed with a rational action of $G$ by automorphisms.
 Then there exists a maximal semisimple subalgebra $B$ in $L$ such that
$L=B\oplus R$ (direct sum of $G$-invariant subspaces).
\end{theorem}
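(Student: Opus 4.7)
The plan is to reduce Theorem~\ref{TheoremAffAlgGrLevi} to the comodule version already established in Theorem~\ref{TheoremHcoLevi} by passing from the rational $G$-action to the dual $\mathcal O(G)$-coaction. First I would take $H=\mathcal O(G)$, the coordinate Hopf algebra of $G$. By Example~\ref{ExampleRatAffAlgGrAction}, the rational action of $G$ on $L$ by automorphisms turns $L$ into an $\mathcal O(G)$-comodule Lie algebra, and moreover an $F$-subspace of $L$ is $G$-invariant if and only if it is an $\mathcal O(G)$-subcomodule. Thus the desired conclusion of Theorem~\ref{TheoremAffAlgGrLevi} is precisely the statement that $L=B\oplus R$ as a direct sum of $\mathcal O(G)$-subcomodules.

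Next I would supply the integral hypothesis: since $F$ is algebraically closed of characteristic $0$ and $G$ is reductive, Example~\ref{ExampleAffAlgGr} provides an $\ad$-invariant left integral $t\in \mathcal O(G)^*$ with $t(1)=1$. So the Hopf algebra $\mathcal O(G)$ satisfies the standing assumption used throughout Section~\ref{SectionHcoLevi}.

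The remaining hypothesis of Theorem~\ref{TheoremHcoLevi} that needs checking is that the solvable radical $R$ of $L$ is an $\mathcal O(G)$-subcomodule, equivalently $G$-invariant. This is immediate: $R$ is a characteristic ideal of $L$ (the solvable radical is preserved by any automorphism of a finite dimensional Lie algebra), and $G$ acts on $L$ by automorphisms, so $gR=R$ for every $g\in G$; by the correspondence in Example~\ref{ExampleRatAffAlgGrAction} this is the same as $\rho(R)\subseteq R\otimes \mathcal O(G)$.

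With all three hypotheses in place, I would invoke Theorem~\ref{TheoremHcoLevi} directly to obtain a maximal semisimple subalgebra $B\subseteq L$ with $L=B\oplus R$ as a direct sum of $\mathcal O(G)$-subcomodules, which by the correspondence are $G$-invariant subspaces. I do not anticipate a real obstacle here; the whole point of having built Theorem~\ref{TheoremHcoLevi} in the generality of arbitrary Hopf algebras with an $\ad$-invariant normalized integral was to make the algebraic-group case a one-line corollary, just as the graded case became Theorem~\ref{TheoremGradLevi}. The only mildly non-trivial point is the characteristic-ness of $R$, and even that is standard.
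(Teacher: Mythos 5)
Your proposal is correct and follows exactly the paper's own argument: the paper's proof also notes that $R$ is invariant under all automorphisms and then combines Examples~\ref{ExampleAffAlgGr} and~\ref{ExampleRatAffAlgGrAction} with Theorem~\ref{TheoremHcoLevi}. Nothing to add.
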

\begin{proof}
We notice that $R$ is invariant under all automorphisms and 
use Examples~\ref{ExampleAffAlgGr}, \ref{ExampleRatAffAlgGrAction}, and Theorem~\ref{TheoremHcoLevi}.
\end{proof}

We conclude the section with examples of $H$-module Lie algebras for which an $H$-invariant
Levi decomposition does not exist.

\begin{example}[Yuri Bahturin]\label{ExampleHnoninvLevi}
Let $L = \left\lbrace\left(\begin{array}{cc} C & D \\
0 & 0
  \end{array}\right) \mathrel{\biggl|} C \in \mathfrak{sl}_m(F), D\in M_m(F)\right\rbrace
  \subseteq \mathfrak{sl}_{2m}(F)$, $m \geqslant 2$.
  Then $$R=\left\lbrace\left(\begin{array}{cc} 0 & D \\
0 & 0
  \end{array}\right) \mathrel{\biggl|} D\in M_m(F)\right\rbrace
  $$
  is the solvable (and nilpotent) radical of $L$.
    Consider $\varphi \in \Aut(L)$ where
  $$\varphi\left(\begin{array}{cc} C & D \\
0 & 0
  \end{array}\right)=\left(\begin{array}{cc} C & C+D \\
0 & 0
  \end{array}\right).$$
  Then $L$ is a $G$-algebra and an $FG$-module algebra where $G=\langle \varphi \rangle
  \cong \mathbb Z$. However there is no $FG$-invariant semisimple subalgebra $B$
  such that $L=B\oplus R$ (direct sum of $FG$-submodules).
\end{example}
\begin{proof} 
Let $a \in L$. Then $\varphi(a)-a \in R$. Suppose $B$ is a $G$-invariant subspace
such that $B \cap R = \lbrace 0 \rbrace$. Then $\varphi(b)-b=0$ for all $b \in B$
and $B \subseteq R$. Hence $B=0$ and there is no $H$-invariant Levi decomposition.
\end{proof}
\begin{example}
Let $L$ be the same algebra as in Example~\ref{ExampleHnoninvLevi}
with an adjoint action on itself by derivations. Then $L$ becomes an $U(L)$-module Lie algebra
(see Example~\ref{ExampleUgModule}),
and $U(L)$-submodules of $L$ are exactly the ideals of $L$.
However there is no $U(L)$-invariant semisimple subalgebra $B$
  such that $L=B\oplus R$ (direct sum of $U(L)$-submodules).
\end{example}
\begin{proof}
Suppose $L=B\oplus R$ where $B$ is an $U(L)$-submodule. Then $B$ is an ideal of $L$
and $R$ is the center of $L$ since $[R,R]=0$. We get a contradiction. Thus there is
no $U(L)$-invariant Levi decomposition.
\end{proof}

\section{$H$-(co)invariant decompositions of semisimple algebras}\label{SectionHSS}

First we need two auxiliary propositions.

\begin{lemma}\label{LemmaHcentral}
Let $L$ be an $H$-module Lie algebra for some Hopf algebra $H$ and $M$
be an $H$-submodule of $L$. Denote by $N$ the centralizer of $M$ in $L$,
i.e. $N = \lbrace a \in L \mid [a,b]=0 \text{ for all } b\in M \rbrace$.
Then $N$ is an $H$-submodule of $L$ too.
\end{lemma}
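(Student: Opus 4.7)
The plan is to imitate the antipode trick already used in equation~(\ref{EqPerebros}) of Lemma~\ref{LemmaHI}. I want to show that for any $a \in N$, $h \in H$, and $b \in M$, the bracket $[ha,b]$ vanishes.

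First I would use the counit identity $\varepsilon(h_{(2)})b = b$ and then insert $h_{(2)}Sh_{(3)}$ in place of $\varepsilon(h_{(2)})$ to obtain
\[
[ha, b] = [h_{(1)}a,\ \varepsilon(h_{(2)})b] = [h_{(1)}a,\ h_{(2)}(Sh_{(3)})b].
\]
Next, using the definition of an $H$-module Lie algebra (so that $h_{(1)}[a,(Sh_{(2)})b] = [h_{(1)(1)}a, h_{(1)(2)}(Sh_{(2)})b]$ combined with coassociativity), I would rewrite this as
\[
[ha, b] = h_{(1)}\bigl[a,\ (Sh_{(2)})b\bigr].
\]

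The crucial step is that $M$ is an $H$-submodule, so $(Sh_{(2)})b \in M$. Since $a \in N$ centralizes all of $M$, we get $[a,(Sh_{(2)})b]=0$, and therefore $[ha,b]=0$. This shows $ha \in N$, proving that $N$ is $H$-invariant.

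I do not expect any serious obstacle here: the argument is the standard $H$-module bookkeeping trick (insert $h_{(2)}Sh_{(3)}$ against the counit, then push the outer $h_{(1)}$ through the bracket using the module-algebra axiom). The only care needed is consistent indexing of the Sweedler components after applying coassociativity; that is routine.
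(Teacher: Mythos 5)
Your proof is correct and is essentially identical to the paper's own argument: the same insertion of $h_{(2)}Sh_{(3)}$ against the counit, the same use of the module-algebra axiom to pull out $h_{(1)}$, and the same observation that $(Sh_{(2)})b \in M$ forces each summand $[a,(Sh_{(2)})b]$ to vanish. Nothing to add.
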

\begin{proof}
Let $a \in N$,
 $b \in M$, $h \in H$. Then $$[ha, b]=[\varepsilon(h_{(2)})h_{(1)}a, b]=
 [h_{(1)}a, h_{(2)}(Sh_{(3)})b]=h_{(1)}[a, (Sh_{(2)})b]=0$$ since for each summand
 $(Sh_{(2)})b \in M$. Hence $N$ is an $H$-submodule.
 \end{proof}
 
 The dual result is
\begin{lemma}\label{LemmacoHcentral}
Let $L$ be an $H$-comodule Lie algebra for some Hopf algebra $H$ and $M$
be an $H$-subcomodule of $L$. Denote by $N$ the centralizer of $M$ in $L$.
Then $N$ is an $H$-subcomodule of $L$ too.
\end{lemma}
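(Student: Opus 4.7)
The plan is to give a direct proof in Sweedler calculus that parallels the argument of Lemma~\ref{LemmaHcentral}, avoiding any appeal to a dual $H^*$-action (which would behave poorly when $H$ is infinite-dimensional). Fix $a \in N$; the goal is to prove $\rho(a) \in N \otimes H$, which, upon writing $\rho(a) = \sum_{i} a_i \otimes h_i$ with the $h_i$ linearly independent in $H$, reduces to showing $[a_i, b] = 0$ for every $b \in M$ and every $i$.

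The key starting observation is that since $M$ is an $H$-subcomodule, writing $\rho(b) = \sum_j b_j \otimes k_j$ with linearly independent $k_j$ forces each $b_j$ to lie in $M$; in Sweedler notation, $b_{(0)} \in M$. Hence $[a, b_{(0)}] = 0$ and therefore
\[
[a, b_{(0)}] \otimes b_{(1)} = 0 \quad \text{in } L \otimes H.
\]

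I would then apply $\rho_L \otimes \id_H$ to this identity, use the $H$-comodule Lie algebra axiom $\rho([a, b_{(0)}]) = [a_{(0)}, (b_{(0)})_{(0)}] \otimes a_{(1)} (b_{(0)})_{(1)}$, and invoke coassociativity to obtain
\[
[a_{(0)}, b_{(0)}] \otimes a_{(1)} b_{(1)} \otimes b_{(2)} = 0
\]
in $L \otimes H \otimes H$ (the triple-Sweedler notation refers to two applications of $\rho$ to $b$). Applying $\id \otimes \id \otimes S$ to the third factor and then multiplying the two $H$-factors, the antipode identity $b_{(1)} S(b_{(2)}) = \varepsilon(b_{(1)}) 1_H$ together with the comodule counit axiom $\varepsilon(b_{(1)}) b_{(0)} = b$ collapses the expression to
\[
[a_{(0)}, b] \otimes a_{(1)} = 0 \quad \text{in } L \otimes H.
\]

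Substituting $\rho(a) = \sum_i a_i \otimes h_i$ and using linear independence of the $h_i$ then yields $[a_i, b] = 0$ for every $b \in M$, so each $a_i \in N$ and $\rho(a) \in N \otimes H$. I do not expect any serious obstacle; it is a routine piece of Sweedler bookkeeping, and the antipode maneuver plays exactly the role that the manipulation $[h_{(1)}a, (Sh_{(2)})b] = h_{(1)}[a, (Sh_{(2)})b]$ in equation~(\ref{EqPerebros}) plays in the proof of Lemma~\ref{LemmaHcentral}.
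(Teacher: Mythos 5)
Your proof is correct and is essentially the paper's own argument: the paper runs the identical chain (counit axiom, the antipode identity $b_{(1)}S(b_{(2)})=\varepsilon(b_{(1)})1$, the comodule-algebra axiom, and the fact that $b_{(0)}\in M$ forces $[a,b_{(0)}]=0$), merely phrased by pairing against an arbitrary functional $h^*\in H^*$ instead of working in $L\otimes H$ and invoking linear independence of the second tensor legs. These are interchangeable formulations, and both correctly avoid assuming $\dim H<\infty$.
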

\begin{proof}
 It is sufficient to prove that for every $h^* \in H^*$, $a \in N$,
 $b \in M$,
 we have $[h^*(a_{(1)})a_{(0)}, b]=0$. Note that
   $$[h^*(a_{(1)})a_{(0)}, b]=h^*(a_{(1)})[a_{(0)}, b]=h^*(a_{(1)})[a_{(0)}, \varepsilon(b_{(1)}) b_{(0)}]=
  h^*(a_{(1)}\varepsilon(b_{(1)})1)[a_{(0)}, b_{(0)}]=$$ $$
  h^*(a_{(1)}b_{(1)} S(b_{(2)}))[a_{(0)}, b_{(0)}]=
  h^*([a,b_{(0)}]_{(1)} S(b_{(1)}))[a,b_{(0)}]_{(0)}
   =0$$ since for each summand
 $[a,b_{(0)}]=0$. Hence $N$ is an $H$-subcomodule.
 \end{proof}

Let $H$ be a Hopf algebra and let $B$ be an $H$-module Lie algebra.
We say that $B$ is \textit{$H$-simple} if for every ideal $I$ of $B$ such that $I$ is an $H$-submodule,
either $I=0$ or $B=I$.

 Analogously, if $B$ is an $H$-comodule Lie algebra, then $B$ is \textit{$H$-simple} if for every ideal $I$ of $B$ such that $I$ is an $H$-subcomodule,
either $I=0$ or $B=I$.

By a classical theorem of Lie theory (see e.g. \cite[Theorem 2.1.4]{GotoGrosshans}), every finite dimensional semisimple Lie algebra is a direct sum of simple Lie algebras. Here we prove the corresponding
result for $H$-(co)module Lie algebras. Our proof is based on~\cite[Proposition 3.1]{PaReZai}.

\begin{theorem}\label{TheoremHsemisimple}
Let $B$ be a finite dimensional semisimple $H$-module Lie algebra over a field of characteristic $0$
where $H$ is an arbitrary Hopf algebra.
Then $B=B_1 \oplus B_2 \oplus \ldots \oplus B_s$ (direct sum of ideals and $H$-submodules) for
some $H$-simple subalgebras $B_i$.
\end{theorem}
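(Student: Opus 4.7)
The plan is to induct on $\dim B$. Let $I$ be a minimal nonzero $H$-invariant ideal of $B$. First I would argue that $I$ is $H$-simple. Since $B$ is a semisimple Lie algebra, the classical structure theorem (see e.g.\ \cite[Theorem 2.1.4]{GotoGrosshans}) gives a decomposition $B = S_1 \oplus \cdots \oplus S_k$ into simple ideals, and every ideal of $B$ is a sum of some of the $S_i$'s. Hence $I = S_{i_1} \oplus \cdots \oplus S_{i_r}$ for some indices, so $I$ is itself semisimple and its Lie-algebra ideals are exactly sums of those particular $S_{i_j}$'s---in particular, every ideal of the Lie algebra $I$ is automatically an ideal of $B$. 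The minimality of $I$ among nonzero $H$-invariant ideals of $B$ then forces $I$ to contain no proper nonzero $H$-invariant ideal of its own, which is precisely the $H$-simplicity of $I$.

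Next I would produce an $H$-invariant complement. Let $J$ be the centralizer of $I$ in $B$. A standard Jacobi identity computation shows that the centralizer of any ideal is again an ideal, and Lemma \ref{LemmaHcentral} shows that $J$ is an $H$-submodule. With $I = S_{i_1}\oplus\cdots\oplus S_{i_r}$ and the fact that distinct simple ideals commute while each $S_i$ has trivial center, a direct computation identifies
\[
J = \bigoplus_{i \notin \{i_1,\ldots,i_r\}} S_i,
\]
so that $B = I \oplus J$ as a direct sum of ideals, both of which are $H$-submodules.

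Finally, $J$ is a sum of simple ideals of $B$, hence a semisimple Lie algebra, and it inherits the $H$-module algebra structure from $B$. Since $\dim J < \dim B$, the induction hypothesis yields a decomposition $J = B_2 \oplus \cdots \oplus B_s$ into $H$-simple ideals of $J$, each of which is again an ideal of $B$ by the same sum-of-simples argument. Setting $B_1 := I$ then gives the desired decomposition $B = B_1 \oplus \cdots \oplus B_s$. The main subtlety, and the point I would be careful to get right, is the passage from ``$I$ is a minimal $H$-invariant ideal \emph{of $B$}'' to ``$I$ is $H$-simple \emph{as a Lie algebra}''---this crucially relies on the observation that in a semisimple $B$ every Lie-algebra ideal of $I$ is already an ideal of $B$, which is what makes the minimality transfer.
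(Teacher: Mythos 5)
Your proof is correct and takes essentially the same route as the paper's: choose a minimal $H$-invariant ideal, identify it as a sum of simple ideals of $B$, take its centralizer (an $H$-submodule by Lemma~\ref{LemmaHcentral}) as the complementary ideal, and induct on dimension. The only difference is that you spell out explicitly why minimality yields $H$-simplicity (every ideal of the summand is already an ideal of $B$), a point the paper leaves implicit.
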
 
\begin{proof} We prove the theorem by induction on $\dim B$. If $B$ is $H$-simple,
then there is nothing to prove.

Let $\tilde B \subset B$ be some minimal $H$-invariant ideal of 
$B$ and  let $B=\tilde B_1 \oplus \tilde B_2 \oplus \ldots \oplus \tilde B_q$
where $\tilde B_j$ are simple ideals of $B$. Then $\tilde B = \tilde B_{i_1} \oplus \tilde B_{i_2}
 \oplus \ldots \oplus \tilde B_{i_\ell}$ for some $i_k$ and
 the centralizer $B_0$ of $\tilde B$ in $B$ consists of the direct sum of the rest $\tilde B_j$.
  By Lemma~\ref{LemmaHcentral}, $B_0$ is an $H$-invariant ideal,
   $B=\tilde B \oplus B_0$, and we can apply the induction.
\end{proof}

The dual result is
\begin{theorem}\label{TheoremcoHsemisimple}
Let $B$ be a finite dimensional semisimple $H$-comodule Lie algebra over a field of characteristic $0$
where $H$ is an arbitrary Hopf algebra.
Then $B=B_1 \oplus B_2 \oplus \ldots \oplus B_s$ (direct sum of ideals and $H$-subcomodules) for
some $H$-simple subalgebras $B_i$.
\end{theorem}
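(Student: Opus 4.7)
The plan is to mimic the proof of Theorem~\ref{TheoremHsemisimple} step by step, replacing the module-level tool (Lemma~\ref{LemmaHcentral}) with its comodule-level counterpart (Lemma~\ref{LemmacoHcentral}). Note that a direct duality argument is not available here: $H$ is permitted to be infinite dimensional, so we cannot dualize to an $H^*$-module algebra. The argument therefore has to run natively in the comodule setting.

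I would induct on $\dim B$. If $B$ is already $H$-simple, there is nothing to prove. Otherwise, since $B$ is finite dimensional, I would choose an $H$-subcomodule ideal $\tilde B \subseteq B$ of minimal positive dimension; this $\tilde B$ is $H$-simple by construction. By the classical Lie-theoretic decomposition (see e.g.~\cite[Theorem 2.1.4]{GotoGrosshans}) one has $B = \tilde B_1 \oplus \tilde B_2 \oplus \cdots \oplus \tilde B_q$ as a direct sum of simple ideals, and every ideal of $B$ is the direct sum of some subcollection of the $\tilde B_j$. Consequently $\tilde B = \tilde B_{i_1} \oplus \cdots \oplus \tilde B_{i_\ell}$, and its centralizer $B_0$ in $B$ is precisely the sum of the remaining $\tilde B_j$, giving $B = \tilde B \oplus B_0$ as a direct sum of ideals.

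By Lemma~\ref{LemmacoHcentral}, $B_0$ is an $H$-subcomodule of $B$; it is also an ideal and itself semisimple (as a direct sum of simple ideals), so $B_0$ inherits the structure of a finite dimensional semisimple $H$-comodule Lie algebra of strictly smaller dimension. The induction hypothesis then decomposes $B_0$ as a direct sum of $H$-simple $H$-subcomodule ideals, and appending $\tilde B$ to that decomposition yields the desired expression for $B$.

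The step requiring some care is the identification of $B_0$ with the complementary sum of simple ideals: this relies on the fact that distinct simple summands of a semisimple Lie algebra commute while each simple summand has trivial center, so the centralizer of any ideal is exactly the sum of the complementary simple factors. Once this is in place, the substantive content is already packaged inside Lemma~\ref{LemmacoHcentral}, and the rest of the argument transfers verbatim from the module version. I do not expect any additional obstacle.
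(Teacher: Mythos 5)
Your proposal is correct and coincides with the paper's own argument: the paper proves Theorem~\ref{TheoremcoHsemisimple} by repeating the proof of Theorem~\ref{TheoremHsemisimple} verbatim, with Lemma~\ref{LemmacoHcentral} in place of Lemma~\ref{LemmaHcentral}, exactly as you describe. Your extra remarks (why duality is unavailable for infinite dimensional $H$, and why the centralizer of $\tilde B$ is the sum of the complementary simple factors) are accurate clarifications of steps the paper leaves implicit.
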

\begin{proof}
We repeat verbatim the proof of Theorem~\ref{TheoremHsemisimple} using Lemma~\ref{LemmacoHcentral}
instead of Lemma~\ref{LemmaHcentral}.
\end{proof}

\section{$H$-(co)invariant analog of the Weyl theorem}\label{SectionHWeyl}

First, we need the following addition to Lemma~\ref{LemmaHcolinear}.
\begin{lemma}\label{LemmaHLlinear}
Let $\pi \colon V \to W$ be a homomorphism of $L$-modules where $(V, \varphi)$ and $(W, \psi)$ are $(H,L)$-modules for an $H$-comodule Lie algebra $L$ and a Hopf algebra $H$. Let $t \in H^*$ be an $\ad$-invariant
 left integral on $H$. Then $\tilde \pi \colon V \to W$ where
 $$\tilde \pi(x) = t\bigl(\pi(x_{(0)})_{(1)}S(x_{(1)})\bigr)\pi(x_{(0)})_{(0)} \text{ for } x \in V,$$ is an
 $H$-colinear homomorphism of $L$-modules. Moreover, if $t(1)=1$, $W \subseteq V$, 
 and $\pi$ is a projection of $V$ on $W$, then $\tilde\pi$ is a projection of $V$ on $W$ too.
 
\end{lemma}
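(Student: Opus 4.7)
The conclusion has three parts: $H$-colinearity of $\tilde\pi$, compatibility with the $L$-actions, and the projection property under the extra hypotheses. My plan is to reduce the first and third parts to Lemma~\ref{LemmaHcolinear}, and then to prove the second part directly using the $\ad$-invariance of $t$.

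The $H$-colinearity of $\tilde\pi$ is immediate from Lemma~\ref{LemmaHcolinear} applied to the linear map $r:=\pi$. For the projection part, let $w\in W$; since $W$ is an $H$-subcomodule, $w_{(0)}\in W$, and since $\pi$ is a projection onto $W$, $\pi(w_{(0)})=w_{(0)}$. Coassociativity of $\rho$ together with the antipode identity and the comodule-counit axiom then collapse $\tilde\pi(w)$ to $t(1)\,\varepsilon(w_{(1)})\,w_{(0)}=w$, exactly as in the second half of the proof of Lemma~\ref{LemmaHcolinear}.

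The main work is the intertwining identity $\tilde\pi(\varphi(a)v)=\psi(a)\tilde\pi(v)$. Starting from the definition of $\tilde\pi(\varphi(a)v)$, I would perform four rewrites: (a) apply the $(H,L)$-module axiom on $V$ to replace $\rho_V(\varphi(a)v)$ by $\varphi(a_{(0)})v_{(0)}\otimes a_{(1)}v_{(1)}$; (b) use that $\pi$ is an $L$-homomorphism to move $\pi$ past $\varphi(a_{(0)})$ and produce $\psi(a_{(0)})\pi(v_{(0)})$; (c) apply the $(H,L)$-module axiom on $W$ to compute the comodule structure of $\psi(a_{(0)})\pi(v_{(0)})$; (d) use anti-multiplicativity of $S$ to split $S(a_{(1)}v_{(1)})=S(v_{(1)})S(a_{(1)})$. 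After using coassociativity of $\rho$ to regroup the three Sweedler factors of $a$ as $(\id\otimes\Delta)\rho(a)=a_{(0)}\otimes a_{(1)}\otimes a_{(2)}$, the expression becomes
$$\tilde\pi(\varphi(a)v)=\sum t\bigl(a_{(1)}\,\pi(v_{(0)})_{(1)}\,S(v_{(1)})\,S(a_{(2)})\bigr)\,\psi(a_{(0)})\,\pi(v_{(0)})_{(0)}.$$

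Now the crucial collapse: for fixed Sweedler indices of $v$ the element $b:=\pi(v_{(0)})_{(1)}S(v_{(1)})\in H$ is independent of $a$, so the $\ad$-invariance axiom $t(a'_{(1)}bS(a'_{(2)}))=\varepsilon(a')t(b)$ applied to the element $a'$ whose comultiplication is the inner pair $a_{(1)}\otimes a_{(2)}$ gives $\sum t(a_{(1)}bS(a_{(2)}))\,a_{(0)}=t(b)\cdot a$ by the comodule-counit identity $a_{(0)}\,\varepsilon(a_{(1)})=a$. Substituting this back and pulling $\psi(a)$ through the remaining sum over $v$ by linearity produces $\psi(a)\tilde\pi(v)$, as required. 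The main obstacle is the Sweedler-index bookkeeping in this collapsing step: I must verify that after the two uses of coassociativity (once on the $V$-side in (a) and once on the $W$-side in (c)) the three Sweedler factors of $a$ remaining in the integrand are precisely $(\id\otimes\Delta)\rho(a)$, so that the standard form of $\ad$-invariance applies to the "middle" factor. No deeper difficulty arises, since the hypotheses --- $\pi$ an $L$-homomorphism, $(V,\varphi)$ and $(W,\psi)$ both $(H,L)$-modules, and $t$ an $\ad$-invariant left integral with $t(1)=1$ --- provide exactly the compatibilities needed at each step.
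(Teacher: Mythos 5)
Your proposal is correct and follows essentially the same route as the paper: colinearity and the projection property are delegated to Lemma~\ref{LemmaHcolinear}, and the intertwining identity is obtained by the same four rewrites (comodule axiom on $V$, $L$-linearity of $\pi$, comodule axiom on $W$, anti-multiplicativity of $S$) followed by the $\ad$-invariance collapse $\sum t\bigl(a_{(1)}\,b\,S(a_{(2)})\bigr)a_{(0)}=t(b)\,a$. The Sweedler bookkeeping you flag as the main obstacle works out exactly as you describe, via coassociativity of $\rho$.
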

\begin{proof} The map $\tilde \pi$ is $H$-colinear by Lemma~\ref{LemmaHcolinear}. 
Let $a \in L$, $x\in V$. Then $$\tilde \pi(\varphi(a)x)= 
t\bigl(\pi((\varphi(a)x)_{(0)})_{(1)}S((\varphi(a)x)_{(1)})\bigr)\pi((\varphi(a)x)_{(0)})_{(0)}=$$
$$t\bigl(\pi(\varphi(a_{(0)})x_{(0)})_{(1)}S(a_{(1)}x_{(1)})\bigr)\pi(\varphi(a_{(0)})x_{(0)})_{(0)}=$$
$$t\bigl((\psi(a_{(0)})\pi(x_{(0)}))_{(1)}S(a_{(1)}x_{(1)})\bigr)(\psi(a_{(0)})\pi(x_{(0)}))_{(0)}=$$
$$t\bigl(a_{(1)}\pi(x_{(0)})_{(1)}S(a_{(2)}x_{(1)})\bigr)\psi(a_{(0)})\pi(x_{(0)})_{(0)}=$$
$$t\bigl(a_{(1)}\pi(x_{(0)})_{(1)}(Sx_{(1)})S(a_{(2)})\bigr)\psi(a_{(0)})\pi(x_{(0)})_{(0)}=$$
$$t\bigl(\pi(x_{(0)})_{(1)}S(x_{(1)})\bigr)\psi(a)\pi(x_{(0)})_{(0)}
=\psi(a)\tilde \pi(x)$$
since $t$ is $\ad$-invariant. Hence $\tilde\pi$ is an
 $H$-colinear homomorphism of $L$-modules.
 
 Let $t(1)=1$, $W \subseteq V$,
 and $\pi$ is a projection of $V$ on $W$. Consider $x \in W$.
 Since $W$ is an $H$-subcomodule, $x_{(0)}\otimes x_{(1)} \in W \otimes H$.
 Thus $$\tilde \pi(x) = t\bigl(\pi(x_{(0)})_{(1)}S(x_{(1)})\bigr)\pi(x_{(0)})_{(0)}
 =$$ $$t\bigl(x_{(0)(1)}S(x_{(1)})\bigr)x_{(0)(0)}
 =t\bigl(x_{(1)}S(x_{(2)})\bigr)x_{(0)}
 =t(1)x=x$$ and $\tilde\pi$ is a projection of $V$ on $W$ too.
\end{proof}

We say that an $(H,L)$-module $(V, \psi)$ is \textit{irreducible}
if it has no nontrivial $L$-submodules that are $H$-subcomodules at the same time.

\begin{theorem}\label{TheoremHcoWeyl}
Let $L$ be an $H$-comodule Lie algebra over a field of characteristic $0$,
let $H$ be a Hopf algebra with an $\ad$-invariant integral $t \in H^*$, $t(1)=1$,
and let $(V, \psi)$ be a finite dimensional $(H,L)$-module
completely reducible as an $L$-module disregarding the $H$-coaction. Then $V=V_1 \oplus V_2 \oplus \ldots \oplus V_s$ for some irreducible $(H,L)$-submodules $V_i$. 
\end{theorem}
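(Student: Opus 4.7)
The plan is to mimic the classical proof of Weyl's theorem and reduce to the averaging provided by Lemma~\ref{LemmaHLlinear}, performing induction on $\dim V$. The base case $\dim V = 0$ is the empty decomposition. For the inductive step, if $V$ itself has no proper nonzero $L$-submodule that is simultaneously an $H$-subcomodule, then $V$ is already irreducible as an $(H,L)$-module and we take $s=1$. Otherwise I would pick a minimal nonzero $(H,L)$-submodule $W \subseteq V$; by minimality $W$ is irreducible in the sense defined just before the theorem.

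The key step is to produce an $(H,L)$-submodule complement to $W$. Since $V$ is completely reducible as an $L$-module, there is some (a priori not $H$-colinear) $L$-module projection $\pi \colon V \to W$. I would apply Lemma~\ref{LemmaHLlinear} to this $\pi$, viewing both $V$ and $W$ as $(H,L)$-modules (this uses that $W$, being an $(H,L)$-submodule, is in particular an $H$-subcomodule, and that $t$ is an $\ad$-invariant left integral with $t(1)=1$). The lemma yields $\tilde\pi \colon V \to W$ which is $H$-colinear, an $L$-module homomorphism, and still a projection onto $W$. Therefore $\ker\tilde\pi$ is an $L$-submodule and an $H$-subcomodule, and $V = W \oplus \ker\tilde\pi$ as $(H,L)$-modules.

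To close the induction I would observe that $\ker\tilde\pi$, being an $L$-submodule of the completely reducible $L$-module $V$, is itself completely reducible as an $L$-module, and inherits the structure of an $(H,L)$-module from $V$. Since $\dim\ker\tilde\pi < \dim V$, the inductive hypothesis decomposes $\ker\tilde\pi$ into a direct sum of irreducible $(H,L)$-submodules; prepending $W$ to this list gives the desired decomposition of $V$.

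The proof is essentially bookkeeping once Lemma~\ref{LemmaHLlinear} is available, so the main (already resolved) difficulty is conceptual rather than technical: one needs the averaged projection $\tilde\pi$ to remain both $L$-linear and a projection onto $W$, and this is exactly what the $\ad$-invariance of the integral together with $W$ being an $H$-subcomodule guarantees. No further subtleties arise, since the existence of minimal $(H,L)$-submodules follows from finite-dimensionality and the induction closes without any extra hypothesis on $H$ beyond those carried by $t$.
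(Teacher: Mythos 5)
Your proposal is correct and follows essentially the same route as the paper: average an $L$-module projection onto an $H$-coinvariant $L$-submodule via Lemma~\ref{LemmaHLlinear} to get an $H$-colinear $L$-module projection, split off its kernel, and induct on $\dim V$. The only (harmless) addition is your explicit remark that the kernel remains completely reducible as an $L$-module, which the paper leaves implicit.
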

\begin{proof}
Here we again use Maschke's trick. It is sufficient to prove that for every $H$-coinvariant
$L$-submodule $W \subseteq V$ there exists a projection $\tilde \pi \colon V \to W$ that is an $H$-colinear
homomorphism of $L$-modules. Then $W = V \oplus \ker \tilde\pi$ and we can use the induction on $\dim V$.

Since $V$ is a completely reducible $L$-module, there exists
a projection $\pi \colon V \to W$ that is a homomorphism of $L$-modules. Now we take the projection $\tilde \pi$
defined in Lemma~\ref{LemmaHLlinear}, which is an $H$-colinear
homomorphism of $L$-modules.
\end{proof}
Now we prove the analog of Weyl
theorem~\cite[Theorem 6.3]{Humphreys} for $H$-comodule Lie algebras.
\begin{corollary}
Let $L$ be a finite dimensional semisimple $H$-comodule Lie algebra over a field of characteristic $0$,
let
$H$ be a Hopf algebra with an $\ad$-invariant integral $t \in H^*$, $t(1)=1$,
and let $(V, \psi)$ be a finite dimensional $(H,L)$-module. Then $V=V_1 \oplus V_2 \oplus \ldots \oplus V_s$
for some irreducible $(H,L)$-submodules $V_i$.
\end{corollary}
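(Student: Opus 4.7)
The plan is to reduce the corollary directly to Theorem~\ref{TheoremHcoWeyl}, using the classical Weyl complete reducibility theorem to supply the missing hypothesis. Recall that Theorem~\ref{TheoremHcoWeyl} concludes the desired decomposition whenever the $(H,L)$-module $(V,\psi)$ is completely reducible as a plain $L$-module, i.e.\ after forgetting the $H$-coaction. So the only thing to verify is that finite dimensional $(H,L)$-modules over a finite dimensional semisimple $L$ in characteristic zero automatically satisfy this.

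First I would observe that the $H$-coaction plays no role in checking complete reducibility as an $L$-module: one simply regards $V$ as a finite dimensional representation of the semisimple Lie algebra $L$ via $\psi$. By the classical Weyl theorem (\cite[Theorem 6.3]{Humphreys}), every such representation is completely reducible, so $V$ decomposes as a direct sum of irreducible $L$-submodules disregarding the comodule structure. This is exactly the hypothesis of Theorem~\ref{TheoremHcoWeyl}.

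Applying Theorem~\ref{TheoremHcoWeyl} to $(V,\psi)$ then yields the required decomposition $V = V_1\oplus V_2\oplus \cdots \oplus V_s$ into irreducible $(H,L)$-submodules. There is no genuine obstacle here: the essential work—constructing, for an $H$-coinvariant $L$-submodule $W\subseteq V$, an $H$-colinear $L$-module projection $\tilde\pi\colon V\to W$ out of a plain $L$-module projection $\pi$ via the integral $t$ (Lemma~\ref{LemmaHLlinear})—has already been carried out in the proof of Theorem~\ref{TheoremHcoWeyl}. The corollary is thus an immediate consequence of combining Weyl's classical theorem with Theorem~\ref{TheoremHcoWeyl}, and no further argument is required.
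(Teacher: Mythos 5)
Your proposal is correct and matches the paper's own argument exactly: invoke the classical Weyl theorem to see that $V$ is completely reducible as an $L$-module, then apply Theorem~\ref{TheoremHcoWeyl}. No further comment is needed.
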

\begin{proof}
By the original Weyl theorem, $V$ is a completely reducible $L$-module.
Now we apply Theorem~\ref{TheoremHcoWeyl}.
\end{proof}

Again, we have the variants of the theorem for $H=FG$ and for finite dimensional $H$.

Let $G$ be a group, let $L=\bigoplus_{g\in G} L^{(g)}$ be a graded Lie algebra, and let $V=\bigoplus_{g\in G} V^{(g)}$ be a $G$-graded vector space.
 We say that $(V, \psi)$, where $\psi \colon L \to \mathfrak{gl}(V)$,
 is a \textit{graded $L$-module} if $\psi(a^{(g)})v^{(h)} \in V^{(gh)}$
 for all $g,h \in G$, $a^{(g)} \in L^{(g)}$, $v^{(h)} \in V^{(h)}$.
  We say that an graded $L$-module $(V, \psi)$ is \textit{irreducible}
if it has no nontrivial graded $L$-submodules.

\begin{theorem}\label{TheoremGradWeyl}
Let $L$ be a Lie algebra over a field of characteristic $0$
graded by any group,
and let $(V, \psi)$ be a finite dimensional graded $L$-module completely reducible as an $L$-module
disregarding the grading.
 Then $$V=V_1 \oplus V_2 \oplus \ldots \oplus V_s$$
for some irreducible graded $L$-submodules $V_i$. 
\end{theorem}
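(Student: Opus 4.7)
The plan is to reduce Theorem \ref{TheoremGradWeyl} to the already-established Theorem \ref{TheoremHcoWeyl} by recasting the grading data as a coaction of the group Hopf algebra $H=FG$, and then invoking the existence of the $\ad$-invariant integral from Example \ref{ExampleFG}. Concretely, I would first observe (as in Example \ref{ExampleGraded}) that the $G$-graded Lie algebra $L$ is an $FG$-comodule Lie algebra via $\rho_L(a^{(g)}) = a^{(g)}\otimes g$. The analogous formula $\rho_V(v^{(g)}) = v^{(g)}\otimes g$ turns the graded vector space $V$ into an $FG$-comodule.

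Next, I would verify that $(V,\psi)$ is an $(H,L)$-module in the sense of Section~\ref{SectionHcoLevi}. For homogeneous $a^{(g)}\in L^{(g)}$ and $v^{(h)}\in V^{(h)}$, the compatibility condition $\psi(a^{(g)})v^{(h)} \in V^{(gh)}$ translates verbatim into
\[
\rho_V(\psi(a)v) = \psi(a_{(0)})v_{(0)}\otimes a_{(1)}v_{(1)},
\]
because on homogeneous elements both sides equal $\psi(a^{(g)})v^{(h)}\otimes gh$. (In fact, since $FG$ is cocommutative, $(V,\psi)$ is automatically a symmetric $(H,L)$-module, although we do not need this here.) The standard observation that a subspace $W\subseteq V$ is graded, i.e.\ $W=\bigoplus_{g\in G}(W\cap V^{(g)})$, precisely when $\rho_V(W)\subseteq W\otimes FG$, shows that graded $L$-submodules of $V$ coincide with $(H,L)$-submodules that are $FG$-subcomodules; in particular the two notions of irreducibility agree.

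With these identifications in place, the final step is immediate: by Example \ref{ExampleFG}, the functional $t\in(FG)^*$ defined by $t(g)=\delta_{g,1}$ is an $\ad$-invariant left integral satisfying $t(1)=1$. Since $V$ is completely reducible as an ordinary $L$-module by hypothesis, Theorem \ref{TheoremHcoWeyl} applies and yields a decomposition $V=V_1\oplus\cdots\oplus V_s$ into irreducible $(FG,L)$-submodules, which by the dictionary above are exactly irreducible graded $L$-submodules.

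There is no real obstacle in this argument; it is a translation of a graded problem into the Hopf-algebraic language that has already been developed. The only subtle point worth stating explicitly in the write-up is that the group $G$ may be arbitrary (in particular infinite), so one should not appeal to finiteness of $FG$ at any step — all we need is the existence of the integral $t$ from Example \ref{ExampleFG}, which holds for every group $G$.
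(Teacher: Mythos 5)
Your proposal is correct and follows exactly the route the paper takes: its proof of Theorem~\ref{TheoremGradWeyl} is the one-line reduction ``use Examples~\ref{ExampleFG}, \ref{ExampleGraded}, and Theorem~\ref{TheoremHcoWeyl},'' and you have simply spelled out the verifications (the graded module is an $(FG,L)$-module, graded submodules are exactly the $FG$-subcomodule $L$-submodules, and the integral from Example~\ref{ExampleFG} exists for an arbitrary group) that the paper leaves implicit. Your remark that no finiteness of $G$ is needed is also the point the paper is making by routing the argument through $FG$-comodules rather than $FG^*$-modules.
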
 
\begin{proof}
We use Examples~\ref{ExampleFG}, \ref{ExampleGraded}, and Theorem~\ref{TheoremHcoWeyl}.
\end{proof}
\begin{corollary}
Let $L$ be a finite dimensional semisimple Lie algebra over a field of characteristic $0$
graded by any group,
and let $(V, \psi)$ be a finite dimensional graded $L$-module.
 Then $V=V_1 \oplus V_2 \oplus \ldots \oplus V_s$
for some irreducible graded $L$-submodules $V_i$. 
\end{corollary}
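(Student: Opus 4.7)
The plan is to reduce this statement to the immediately preceding Theorem~\ref{TheoremGradWeyl} by stripping off the grading and invoking the classical Weyl theorem for the underlying Lie algebra module. Concretely, the hypothesis of Theorem~\ref{TheoremGradWeyl} is that $(V,\psi)$ is a finite dimensional graded $L$-module which is \emph{completely reducible as an $L$-module disregarding the grading}. In the corollary we are given a finite dimensional graded $L$-module over a finite dimensional semisimple Lie algebra $L$ in characteristic $0$; so the only gap to fill is the complete reducibility of $V$ as an ungraded $L$-module.

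First I would forget the grading on $V$ and view $(V,\psi)$ as an ordinary finite dimensional representation of the finite dimensional semisimple Lie algebra $L$ over a field of characteristic~$0$. By the classical Weyl theorem on complete reducibility (for instance \cite[Theorem 6.3]{Humphreys}), every such representation is completely reducible: $V$ decomposes as a direct sum of irreducible $L$-submodules (ignoring the grading).

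Having established the complete reducibility hypothesis, I would then apply Theorem~\ref{TheoremGradWeyl} directly to $(V,\psi)$. This yields a decomposition $V = V_1 \oplus V_2 \oplus \ldots \oplus V_s$ into irreducible \emph{graded} $L$-submodules, which is exactly the required conclusion.

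There is no real obstacle here; the corollary is a two-line consequence of combining the ungraded Weyl theorem (which supplies complete reducibility as a plain $L$-module) with Theorem~\ref{TheoremGradWeyl} (which upgrades complete reducibility from the ungraded to the graded setting via the Maschke-type argument already developed using Examples~\ref{ExampleFG} and~\ref{ExampleGraded} together with the $\ad$-invariant integral on $FG$).
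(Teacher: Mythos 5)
Your proof is correct and is exactly the intended argument: the paper leaves this corollary without an explicit proof, but the analogous corollary following Theorem~\ref{TheoremHcoWeyl} is proved in precisely this way (invoke the classical Weyl theorem to get complete reducibility as an ungraded $L$-module, then apply the graded/$H$-colinear version of the theorem). Nothing is missing.
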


Let $L$ be an $H$-module Lie algebra and $V$ be an $H$-module for some Hopf algebra $H$.
We say that $(V, \psi)$, where $\psi \colon L \to \mathfrak{gl}(V)$,
 is a \textit{$(H,L)$-module} if $h(\psi(a)v)=\psi(h_{(1)}a)(h_{(2)}v)$
 for all $a\in L$, $h\in H$, $v\in V$.
   We say that an $(H,L)$-module $(V, \psi)$ is \textit{irreducible}
if it has no nontrivial $L$-submodules that are $H$-submodules at the same time.

\begin{theorem}\label{TheoremHWeyl}
Let $L$ be an $H$-(co)module Lie algebra over a field of characteristic $0$,
let $H$ be a finite dimensional (co)semisimple Hopf algebra,
and let $(V, \psi)$ be a finite dimensional $(H,L)$-module completely reducible as an $L$-module
disregarding the $H$-action.
 Then $$V=V_1 \oplus V_2 \oplus \ldots \oplus V_s$$
for some irreducible $(H,L)$-submodules $V_i$. 
\end{theorem}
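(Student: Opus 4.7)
The plan is to reduce both the module and comodule cases of Theorem~\ref{TheoremHWeyl} to the already-established Theorem~\ref{TheoremHcoWeyl}. In both cases I need to produce an $\ad$-invariant left integral with value $1$ on $1$ for the relevant Hopf algebra acting coassociatively, and then verify that the $(H,L)$-module hypothesis passes through the translation.

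\textbf{The comodule case.} Suppose $L$ is an $H$-comodule Lie algebra and $(V,\psi)$ is an $(H,L)$-module in the comodule sense. Since $H$ is finite dimensional and (co)semisimple, Example~\ref{ExampleHSS} furnishes an $\ad$-invariant left integral $t \in H^*$ with $t(1)=1$. Theorem~\ref{TheoremHcoWeyl} then applies verbatim and yields the desired decomposition $V = V_1 \oplus \cdots \oplus V_s$ into irreducible $(H,L)$-submodules.

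\textbf{The module case.} Here I use the duality recalled at the end of the ``$H$-module Lie algebras'' subsection: since $\dim H<\infty$, the $H$-module structure on $L$ is equivalent to an $H^*$-comodule structure on $L$ via $\rho(a)=a_{(0)}\otimes a_{(1)}$ characterised by $h^*a = h^*(a_{(1)})a_{(0)}$. By the Larson--Radford theorem, $H^*$ is again finite dimensional and (co)semisimple, so Example~\ref{ExampleHSS} applied to $H^*$ produces an $\ad$-invariant left integral $t \in (H^*)^*=H$ with $t(1_{H^*})=1$, precisely the hypothesis needed for Theorem~\ref{TheoremHcoWeyl} relative to $H^*$. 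It remains to check that the $(H,L)$-module axiom $h(\psi(a)v)=\psi(h_{(1)}a)(h_{(2)}v)$ translates to the $(H^*,L)$-module axiom $\rho_V(\psi(a)v)=\psi(a_{(0)})v_{(0)}\otimes a_{(1)}v_{(1)}$ under the duality; this is a routine computation using the fact that the coaction $\rho_V$ is dual to the $H$-action by the analogous formula $h^*v=h^*(v_{(1)})v_{(0)}$, and the complete reducibility of $V$ as an $L$-module does not involve either structure, so it is preserved.

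\textbf{Conclusion.} Applying Theorem~\ref{TheoremHcoWeyl} to the $H^*$-comodule structure, $V$ decomposes as a direct sum of irreducible $(H^*,L)$-submodules, which under the duality are precisely the irreducible $(H,L)$-submodules of the original module, and this gives the required decomposition. The only non-formal step is the translation in the module case; since all relevant Hopf-algebraic data is finite dimensional, and since both notions of irreducibility agree ($L$-submodules that are $H$-submodules correspond bijectively to $L$-submodules that are $H^*$-subcomodules), no obstacle arises and the proof is essentially a two-line invocation of the previous result together with duality.
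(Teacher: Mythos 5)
Your proposal is correct and follows exactly the paper's own (one-line) argument: the paper also proves this theorem by combining Example~\ref{ExampleHSS} with Theorem~\ref{TheoremHcoWeyl} via the duality between $H$-actions and $H^*$-coactions for finite dimensional $H$. You have merely spelled out the routine duality verifications that the paper leaves implicit.
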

\begin{proof}
Using the duality and combining Example~\ref{ExampleHSS} and Theorem~\ref{TheoremHcoWeyl}, we obtain the theorem.
\end{proof}
\begin{corollary}
Let $L$ be a finite dimensional semisimple $H$-(co)module Lie algebra over a field of characteristic $0$,
let $H$ be a finite dimensional (co)semisimple Hopf algebra,
and let $(V, \psi)$ be a finite dimensional $(H,L)$-module.
 Then $V=V_1 \oplus V_2 \oplus \ldots \oplus V_s$
for some irreducible $(H,L)$-submodules $V_i$. 
\end{corollary}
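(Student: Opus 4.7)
The plan is to reduce this statement to the preceding Theorem~\ref{TheoremHWeyl} by first establishing complete reducibility of $V$ as an ordinary $L$-module. Since $L$ is a finite dimensional semisimple Lie algebra over a field of characteristic $0$, the classical Weyl theorem (e.g. \cite[Theorem 6.3]{Humphreys}) applies directly and tells us that every finite dimensional $L$-module is completely reducible. Note that in order to invoke this we do not use any $H$-structure whatsoever; we only use that $\psi \colon L \to \mathfrak{gl}(V)$ is a representation of a semisimple Lie algebra on a finite dimensional space.

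Once complete reducibility of $V$ as an $L$-module is in hand, the hypotheses of Theorem~\ref{TheoremHWeyl} are satisfied: $L$ is an $H$-(co)module Lie algebra, $H$ is a finite dimensional (co)semisimple Hopf algebra, $(V,\psi)$ is a finite dimensional $(H,L)$-module, and $V$ is completely reducible as an $L$-module disregarding the $H$-(co)action. Therefore Theorem~\ref{TheoremHWeyl} yields a direct sum decomposition $V = V_1 \oplus V_2 \oplus \ldots \oplus V_s$ into irreducible $(H,L)$-submodules, which is exactly the conclusion sought.

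There is essentially no obstacle here: the corollary is the direct combination of the classical Weyl theorem (applied to the underlying Lie module) and the $H$-(co)invariant refinement proved just before. The only point worth stating explicitly is the compatibility of the two steps, namely that the notion of $(H,L)$-module used in Theorem~\ref{TheoremHWeyl} is precisely the one used in the corollary, so the irreducible summands produced are automatically both $L$-submodules and $H$-sub(co)modules, as required.
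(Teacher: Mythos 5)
Your proof is correct and follows exactly the route the paper intends: invoke the classical Weyl theorem to get complete reducibility of $V$ as an $L$-module (using only that $L$ is finite dimensional semisimple in characteristic $0$), then apply Theorem~\ref{TheoremHWeyl}. This mirrors verbatim the paper's proof of the analogous corollary to Theorem~\ref{TheoremHcoWeyl}.
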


Let $L$ be a Lie $G$-algebra and let $V$ be an $FG$-module for some group $G$.
We say that $(V, \psi)$, where $\psi \colon L \to \mathfrak{gl}(V)$,
 is a \textit{$(G,L)$-module} if $g(\psi(a)v)=\psi(ga)(gv)$
 for all $a\in L$, $g\in G$, $v\in V$.
   We say that a $(G,L)$-module $(V, \psi)$ is \textit{irreducible}
if it has no nontrivial $G$-invariant $L$-submodules.

\begin{theorem}\label{TheoremAffAlgGrWeyl}
Let $L$ be a finite dimensional Lie algebra over an algebraically closed field $F$ of characteristic $0$
and let $G$ be a reductive affine algebraic group over $F$.
Suppose $L$ is endowed with a rational action of $G$ by automorphisms.
Let $(V, \psi)$ be a finite dimensional $(G,L)$-module with a rational $G$-action, completely reducible as an $L$-module disregarding the $G$-action.
 Then $$V=V_1 \oplus V_2 \oplus \ldots \oplus V_s$$
for some irreducible $(G,L)$-submodules $V_i$. 
\end{theorem}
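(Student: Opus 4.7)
The plan is to reduce Theorem~\ref{TheoremAffAlgGrWeyl} to the $H$-comodule version (Theorem~\ref{TheoremHcoWeyl}) by taking $H = \mathcal O(G)$. First I would invoke Example~\ref{ExampleAffAlgGr}: since $G$ is a reductive affine algebraic group over the algebraically closed field $F$ of characteristic $0$, the coordinate Hopf algebra $\mathcal O(G)$ admits an $\ad$-invariant left integral $t \in \mathcal O(G)^*$ with $t(1)=1$. Next, by Example~\ref{ExampleRatAffAlgGrAction}, the rational action of $G$ on $L$ by automorphisms equips $L$ with the structure of an $\mathcal O(G)$-comodule Lie algebra, and $\mathcal O(G)$-subcomodules of $L$ coincide with $G$-invariant subspaces.

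The second step is to transfer the rational $G$-action on $V$ to an $\mathcal O(G)$-comodule structure $\rho_V \colon V \to V\otimes \mathcal O(G)$ by the same formula as for $L$: fixing a basis $f_1,\dots,f_n$ of $V$, writing $gf_j = \sum_i \sigma_{ij}(g) f_i$ with $\sigma_{ij}\in\mathcal O(G)$, we set $\rho_V(f_j)=\sum_i f_i\otimes \sigma_{ij}$. The compatibility condition for a $(G,L)$-module, $g(\psi(a)v)=\psi(ga)(gv)$, then translates (by the same calculation as in the proof of Example~\ref{ExampleRatAffAlgGrAction}) into the identity
\[
\rho_V(\psi(a)v) = \psi(a_{(0)})v_{(0)} \otimes a_{(1)}v_{(1)},
\]
so $(V,\psi)$ becomes an $(\mathcal O(G),L)$-module in the sense of Section~\ref{SectionHWeyl}. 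Moreover, since $\mathcal O(G)$ is commutative, $a_{(1)}v_{(1)} = v_{(1)}a_{(1)}$, so this is automatically a symmetric module structure, and $G$-invariant $L$-submodules of $V$ coincide precisely with $\mathcal O(G)$-subcomodules that are $L$-submodules.

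I would then apply Theorem~\ref{TheoremHcoWeyl} to $(V,\psi)$: since $V$ is completely reducible as an $L$-module (disregarding the $G$-action), we obtain a decomposition $V = V_1 \oplus V_2 \oplus \ldots \oplus V_s$ into irreducible $(\mathcal O(G),L)$-submodules. By the identification above, each $V_i$ is a $G$-invariant $L$-submodule of $V$ with no proper nonzero $G$-invariant $L$-submodule, i.e.\ an irreducible $(G,L)$-submodule, giving the desired decomposition.

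The only delicate point is the check that rational $G$-modules translate to $\mathcal O(G)$-comodules compatibly with the $L$-action so that the notions of irreducibility match up; once Example~\ref{ExampleRatAffAlgGrAction} is invoked for $V$ in place of $L$ and the compatibility identity is verified, everything else is a direct application of Theorem~\ref{TheoremHcoWeyl}, so I do not anticipate a substantial obstacle.
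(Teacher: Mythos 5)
Your proposal is correct and is exactly the argument the paper intends: the paper's proof of Theorem~\ref{TheoremAffAlgGrWeyl} simply cites Examples~\ref{ExampleAffAlgGr}, \ref{ExampleRatAffAlgGrAction}, and Theorem~\ref{TheoremHcoWeyl}, and you have spelled out the same reduction (rational $G$-module $\leadsto$ $\mathcal O(G)$-comodule, matching of $G$-invariant $L$-submodules with $\mathcal O(G)$-coinvariant ones) in more detail.
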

\begin{proof}
We use Examples~\ref{ExampleAffAlgGr}, \ref{ExampleRatAffAlgGrAction}, and Theorem~\ref{TheoremHcoWeyl}.
\end{proof}
\begin{corollary}
Let $L$ be a finite dimensional semisimple Lie algebra over an algebraically closed field of characteristic $0$,
and let $G$ be a reductive affine algebraic group over $F$.
Suppose $L$ is endowed with a rational action of $G$ by automorphisms.
Let $(V, \psi)$ be a finite dimensional $(G,L)$-module with a rational $G$-action.
 Then $$V=V_1 \oplus V_2 \oplus \ldots \oplus V_s$$
for some irreducible $(G,L)$-submodules $V_i$. 
\end{corollary}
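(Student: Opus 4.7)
The plan is to obtain this corollary as an immediate consequence of Theorem~\ref{TheoremAffAlgGrWeyl}. Comparing the hypotheses, the only assumption present in Theorem~\ref{TheoremAffAlgGrWeyl} and missing from the corollary is the complete reducibility of $V$ as an ordinary $L$-module (disregarding the $G$-action). So the task reduces to verifying this one extra hypothesis.

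First I would record that $L$ is finite dimensional and semisimple over a field of characteristic $0$, and that $\psi \colon L \to \mathfrak{gl}(V)$ is a finite dimensional representation. Under these hypotheses the classical Weyl theorem (see \cite[Theorem~6.3]{Humphreys}) guarantees that $V$ decomposes as a direct sum of irreducible $L$-submodules, which is precisely the missing complete reducibility hypothesis.

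Next I would invoke Theorem~\ref{TheoremAffAlgGrWeyl} with the $(G,L)$-module $(V,\psi)$: the algebraic closedness of $F$, the reductivity of $G$, the rationality of the action of $G$ on $L$ by automorphisms, and the rationality of the $G$-action on $V$ are all given in the statement of the corollary, while the complete reducibility of $V$ as an $L$-module has just been established via the classical Weyl theorem. The theorem then yields a decomposition $V=V_1\oplus V_2\oplus\ldots\oplus V_s$ into irreducible $(G,L)$-submodules, as required.

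There is no real obstacle: the entire content is the routine observation that when $L$ itself is semisimple, the background complete-reducibility hypothesis of Theorem~\ref{TheoremAffAlgGrWeyl} is automatic. The structural work has already been done, first in Theorem~\ref{TheoremHcoWeyl} (the $H$-coinvariant Maschke-type argument based on Lemma~\ref{LemmaHLlinear}), and then in the specialization Theorem~\ref{TheoremAffAlgGrWeyl} via the framework of Examples~\ref{ExampleAffAlgGr} and~\ref{ExampleRatAffAlgGrAction}.
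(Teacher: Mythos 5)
Your proposal is correct and matches the paper's intended argument exactly: the classical Weyl theorem supplies the complete reducibility of $V$ as an $L$-module, and Theorem~\ref{TheoremAffAlgGrWeyl} then gives the decomposition into irreducible $(G,L)$-submodules. This is the same two-step reduction the paper uses for the analogous corollary following Theorem~\ref{TheoremHcoWeyl}.
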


\section{$H$-(co)invariant decomposition of the solvable radical}\label{SectionHLBSN}

First we start with the case of not necessarily finite dimensional $H$.

\begin{theorem}\label{TheoremHcoLBSN}
Let $L$ be a finite dimensional $H$-comodule Lie algebra over a field $F$ of characteristic $0$
where $H$ is a Hopf algebra with an $\ad$-invariant integral $t \in H^*$, $t(1)=1$.
Suppose the nilpotent radical $N$ and the solvable radical $R$ are $H$-subcomodules.
Denote by $B$ a semisimple $H$-coinvariant subalgebra such that $L=B \oplus R$.
Then there exists an $H$-subcomodule $S$ such that $R = S \oplus N$
and $[B,S]=0$. Moreover, $L=B\oplus S\oplus N$.
\end{theorem}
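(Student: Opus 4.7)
The plan is to construct $S$ as an $H$-coinvariant $B$-submodule complement of $N$ in $R$, and then use the classical fact that $[L,R] \subseteq N$ to conclude that $B$ must act trivially on such a complement.

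First I would verify that $(R, \ad|_B)$ carries the structure of an $(H,B)$-module. Since $L$ is an $H$-comodule Lie algebra and both $B$ and $R$ are $H$-subcomodules, the comodule compatibility
\[ \rho([a,v]) = [a_{(0)}, v_{(0)}] \otimes a_{(1)} v_{(1)} \quad \text{for } a \in B,\ v \in R \]
is just the defining identity of an $H$-comodule Lie algebra restricted to $B \times R$. Since $N$ is $H$-coinvariant by hypothesis and $B$-stable (as $N$ is an ideal of $L$), $N$ is an $(H,B)$-submodule of $R$.

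Next I would invoke the classical Weyl theorem to conclude that $R$ is completely reducible as a $B$-module (with $B$ finite dimensional semisimple and $\dim R < \infty$ over a field of characteristic $0$), and then feed this into Theorem~\ref{TheoremHcoWeyl}: this gives a decomposition of $R$ into irreducible $(H,B)$-submodules, and in particular an $(H,B)$-submodule $S \subseteq R$ such that $R = S \oplus N$. By construction $S$ is an $H$-subcomodule of $L$, and $[B,S] \subseteq S$.

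Finally, I would verify $[B,S]=0$ using the classical inclusion $[L,R]\subseteq N$ from \cite[Proposition 2.1.7]{GotoGrosshans} (already used in the proof of Theorem~\ref{TheoremRadicals}): it yields $[B,S]\subseteq [B,R]\subseteq N$, which combined with $[B,S]\subseteq S$ forces $[B,S]\subseteq S\cap N=0$. The desired decomposition $L=B\oplus S\oplus N$ then follows from $L=B\oplus R=B\oplus S\oplus N$.

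The only real obstacle is to make sure Theorem~\ref{TheoremHcoWeyl} actually applies in this setting; once one checks that $(R,\ad|_B)$ is an $(H,B)$-module and that $R$ is $B$-completely reducible, the rest of the argument is a short linear-algebra computation together with the standard identity $[L,R]\subseteq N$.
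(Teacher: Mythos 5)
Your proposal is correct and follows essentially the same route as the paper: both view the radical as an $(H,B)$-module under the adjoint action, apply the $H$-coinvariant Weyl theorem (Theorem~\ref{TheoremHcoWeyl} together with the classical Weyl theorem for the semisimple algebra $B$) to split off an $(H,B)$-complement $S$ of $N$ in $R$, and then kill $[B,S]$ via $[B,S]\subseteq S\cap N=0$ using $[L,R]\subseteq N$. The only cosmetic difference is that the paper takes all of $L$ as the ambient $(H,B)$-module while you restrict to $R$ from the start; this changes nothing.
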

\begin{proof}
Consider the adjoint representation of $B$ on $L$. Then $L$ is an $(H,B)$-module.
By the corollary of Theorem~\ref{TheoremHcoWeyl}, this module is completely reducible.
Moreover, the ideals $N$ and $R$ are $(H,B)$-submodules of $L$.
Thus there exists an $(H,B)$-submodule $S$ such that $R = S \oplus N$.
Hence $[B, S] \subseteq S$. 
However, by \cite[Proposition 2.1.7]{GotoGrosshans}, 
$[B, S] \subseteq [L,R]\subseteq N$. Thus $[B,S]=0$.
\end{proof}

Using the duality and applying Example~\ref{ExampleHSS} and Theorem~\ref{TheoremRadicals}, we derive from Theorem~\ref{TheoremHcoLBSN} the following

\begin{theorem}\label{TheoremHLBSN}
Let $L$ be a finite dimensional $H$-(co)module Lie algebra over a field $F$ of characteristic $0$
where $H$ is a finite dimensional (co)semisimple Hopf algebra.
Denote by $N$ the nilpotent radical and by $R$ the solvable radical of $L$.
Then there exists an $H$-sub(co)module $S$ such that $R = S \oplus N$,
$L=B\oplus S\oplus N$ (direct sum of $H$-sub(co)modules), and $[B,S]=0$
where $B$ is a maximal semisimple subalgebra.
\end{theorem}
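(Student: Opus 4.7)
The plan is to deduce Theorem \ref{TheoremHLBSN} directly from the already established Theorem \ref{TheoremHcoLBSN}, handling the $H$-comodule and $H$-module statements separately, although the second reduces immediately to the first via the duality between $H$-modules and $H^*$-comodules available because $\dim H < +\infty$.

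For the $H$-comodule case, I would simply verify that all hypotheses of Theorem \ref{TheoremHcoLBSN} hold in the present setting, so that its conclusion can be quoted verbatim. First, since $H$ is a finite dimensional (co)semisimple Hopf algebra over a field of characteristic $0$, Example \ref{ExampleHSS} supplies an $\ad$-invariant left integral $t \in H^*$ with $t(1)=1$. Next, the corollary of Theorem \ref{TheoremRadicals} guarantees that both the solvable radical $R$ and the nilpotent radical $N$ of $L$ are $H$-subcomodules. Theorem \ref{TheoremHcoLevi} then produces a maximal semisimple $H$-coinvariant subalgebra $B$ with $L = B \oplus R$. At this point every hypothesis of Theorem \ref{TheoremHcoLBSN} is in force, and its conclusion yields the $H$-subcomodule $S \subseteq R$ with $R = S \oplus N$, $[B,S]=0$, and $L = B \oplus S \oplus N$ as a direct sum of $H$-subcomodules.

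For the $H$-module case, I would pass to the dual. Since $\dim H < +\infty$, the correspondence recalled at the end of Subsection 1.4 shows that $L$ is an $H$-module Lie algebra if and only if it is an $H^*$-comodule Lie algebra, and that $H$-submodules coincide with $H^*$-subcomodules. By the Larson–Radford theorem, $H^*$ is again a finite dimensional (co)semisimple Hopf algebra (since semisimplicity and cosemisimplicity of a finite dimensional Hopf algebra are dual conditions and, in characteristic $0$, coincide with one another). Applying the $H$-comodule case just proved to $L$ viewed as an $H^*$-comodule Lie algebra produces the desired decomposition, and translating back through the duality returns the $H$-module version.

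I do not anticipate a substantial obstacle, as the argument is essentially a bookkeeping combination of the machinery already assembled. The only mild point to keep in mind is that $B$, $R$, $N$, and $S$ must all be chosen compatibly as $H$-(co)invariant subspaces, but this is automatic: Theorem \ref{TheoremRadicals} gives stable $R$ and $N$ before a semisimple complement is selected, Theorem \ref{TheoremHcoLevi} then chooses $B$ against this fixed $R$, and Theorem \ref{TheoremHcoLBSN} takes these $B$, $R$, $N$ as input and produces $S$ with the required properties, so no circularity or incompatibility can arise.
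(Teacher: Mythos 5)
Your proposal is correct and follows essentially the same route as the paper, which derives Theorem~\ref{TheoremHLBSN} from Theorem~\ref{TheoremHcoLBSN} by combining Example~\ref{ExampleHSS}, Theorem~\ref{TheoremRadicals}, and the finite-dimensional duality between $H$-modules and $H^*$-comodules. Your explicit invocation of Theorem~\ref{TheoremHcoLevi} to supply the $H$-coinvariant subalgebra $B$ required as input to Theorem~\ref{TheoremHcoLBSN} is a detail the paper leaves implicit, and it is handled correctly.
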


In particular,

\begin{theorem}\label{TheoremGradLBSN}
Let $G$ be a finite group and let $L$ be a finite dimensional $G$-graded Lie algebra over a field $F$ of characteristic $0$. Denote by $N$ the nilpotent radical and by $R$ the solvable radical of $L$.
Then there exists a graded subspace $S$ such that $R = S \oplus N$,
$L=B\oplus S\oplus N$ (direct sum of graded subspaces), and $[B,S]=0$
where $B$ is a maximal semisimple subalgebra.
\end{theorem}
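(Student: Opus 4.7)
The plan is to derive Theorem~\ref{TheoremGradLBSN} as an immediate specialization of Theorem~\ref{TheoremHLBSN} to the Hopf algebra $H = FG$. The correspondence between $G$-gradings and $FG$-comodule structures established in Example~\ref{ExampleGraded} is the bridge: a subspace $W \subseteq L$ is graded in the sense of the paper if and only if it is an $FG$-subcomodule under the coaction $\rho(a^{(g)}) = a^{(g)} \otimes g$.

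First I would verify that $H = FG$ satisfies the hypotheses of Theorem~\ref{TheoremHLBSN}. Since $G$ is finite, $FG$ is finite dimensional, and since $\operatorname{char} F = 0$, Maschke's theorem gives that $FG$ is semisimple; by the Larson--Radford theorem cited in the proof of Example~\ref{ExampleHSS}, it is then cosemisimple as well. So $FG$ is a finite dimensional (co)semisimple Hopf algebra, exactly as required.

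Next I would invoke Theorem~\ref{TheoremHLBSN} with $H = FG$ and the $FG$-comodule structure on $L$ coming from the grading. The theorem supplies a maximal semisimple subalgebra $B \subseteq L$ and an $FG$-subcomodule $S \subseteq R$ such that $R = S \oplus N$, $L = B \oplus S \oplus N$ as a direct sum of $FG$-subcomodules, and $[B,S] = 0$. Translating $FG$-subcomodules back into graded subspaces by Example~\ref{ExampleGraded}, we get that $B$, $S$, and $N$ are all graded subspaces (with $N$ and $R$ automatically graded by Theorem~\ref{TheoremRadicals} applied to $FG$), and the decomposition $L = B \oplus S \oplus N$ is a direct sum of graded subspaces, as required.

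There is no real obstacle here: the theorem is essentially a translation of Theorem~\ref{TheoremHLBSN} through the grading/comodule dictionary. The only thing one must be careful about is confirming that all three properties listed in Theorem~\ref{TheoremHLBSN} (the existence of the maximal semisimple $B$, the splitting $R = S \oplus N$, and the vanishing $[B,S]=0$) survive the translation, which is immediate since the $FG$-subcomodules are precisely the graded subspaces.
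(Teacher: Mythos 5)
Your proposal is correct and is exactly the paper's intended argument: the paper introduces Theorem~\ref{TheoremGradLBSN} with ``In particular,'' as a direct specialization of Theorem~\ref{TheoremHLBSN} to $H=FG$ (finite dimensional and (co)semisimple since $G$ is finite and $\operatorname{char}F=0$), with graded subspaces corresponding to $FG$-subcomodules via Example~\ref{ExampleGraded}. Nothing is missing.
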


Also we obtain

\begin{theorem}\label{TheoremAffAlgGrLBSN}
Let $L$ be a finite dimensional Lie algebra over an algebraically closed field $F$ of characteristic $0$
and let $G$ be a reductive affine algebraic group over $F$.
Suppose $L$ is endowed with a rational action of $G$ by automorphisms.
Denote by $N$ the nilpotent radical and by $R$ the solvable radical of $L$.
Then there exists a $G$-invariant subspace $S$ such that $R = S \oplus N$,
$L=B\oplus S\oplus N$ (direct sum of $G$-invariant subspaces), and $[B,S]=0$
where $B$ is a maximal semisimple subalgebra.
\end{theorem}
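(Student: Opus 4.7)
The plan is to reduce this statement to Theorem~\ref{TheoremHcoLBSN} by passing from the group $G$ to its coordinate Hopf algebra $H = \mathcal O(G)$. By Example~\ref{ExampleAffAlgGr}, since $F$ is algebraically closed of characteristic $0$ and $G$ is reductive, $H$ admits an $\ad$-invariant left integral $t \in H^*$ with $t(1) = 1$. By Example~\ref{ExampleRatAffAlgGrAction}, the rational action of $G$ on $L$ by automorphisms makes $L$ into an $H$-comodule Lie algebra, and the $H$-subcomodules of $L$ are precisely the $G$-invariant subspaces.

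Next, I would verify the hypotheses of Theorem~\ref{TheoremHcoLBSN} in this setting. The solvable radical $R$ and the nilpotent radical $N$ are characteristic ideals of $L$: every Lie algebra automorphism of $L$ preserves them. Since $G$ acts by automorphisms, it follows that $R$ and $N$ are $G$-invariant, hence $H$-subcomodules. By Theorem~\ref{TheoremAffAlgGrLevi}, there exists a maximal semisimple subalgebra $B \subseteq L$ with $L = B \oplus R$ a decomposition into $G$-invariant subspaces, equivalently into $H$-subcomodules.

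With all hypotheses in place, I apply Theorem~\ref{TheoremHcoLBSN} to $L$ as an $H$-comodule Lie algebra with the subalgebra $B$ above. This produces an $H$-subcomodule $S \subseteq R$ such that $R = S \oplus N$, $[B,S] = 0$, and $L = B \oplus S \oplus N$ as a direct sum of $H$-subcomodules. Translating back via Example~\ref{ExampleRatAffAlgGrAction}, $S$ is $G$-invariant and the three summands give the desired decomposition into $G$-invariant subspaces. There is essentially no obstacle here, as the theorem is a direct corollary of the comodule version; the only nontrivial ingredient is the characteristic nature of $R$ and $N$, which is classical, and all the deeper analytical work (the existence of the integral, the Weyl-type complete reducibility, and the fact that $[B,S]\subseteq [L,R]\subseteq N$ forcing $[B,S]=0$) has already been carried out in the $H$-comodule framework of Section~\ref{SectionHLBSN}.
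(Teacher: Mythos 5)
Your proposal is correct and matches the paper's own proof: the paper likewise observes that $R$ and $N$ are invariant under all automorphisms (hence $\mathcal O(G)$-subcomodules) and then combines Examples~\ref{ExampleAffAlgGr} and~\ref{ExampleRatAffAlgGrAction} with Theorem~\ref{TheoremHcoLBSN}. Your explicit appeal to Theorem~\ref{TheoremAffAlgGrLevi} to supply the $G$-invariant $B$ is a useful detail that the paper leaves implicit.
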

\begin{proof}
We notice that $R$ and $N$ are invariant under all automorphisms, and 
use Examples~\ref{ExampleAffAlgGr}, \ref{ExampleRatAffAlgGrAction}, and Theorem~\ref{TheoremHcoLBSN}.
\end{proof}

\section*{Acknowledgements}

I am grateful to Yuri Bahturin and Mikhail Kotchetov for helpful
discussions.

\end{document}